\newtheorem{theorem}{Theorem}[section]
\newtheorem{lemma}[theorem]{Lemma}
\newtheorem{proposition}[theorem]{Proposition}
\newtheorem{corollary}[theorem]{Corollary}
\newtheorem{definition}{Definition}[section]
\newcommand{\beq}{\begin{equation*}}
\newcommand{\eeq}{\end{equation*}}
\newcommand{\beqlbl}{\begin{equation}}
\newcommand{\eeqlbl}{\end{equation}}
\newcommand{\ba}{\begin{align*}}
\newcommand{\ea}{\end{align*}}
\newcommand{\field}[1]{\mathbb{#1}}
\newcommand{\matbegin}[1]{\left (  \begin{array} {#1} }
\newcommand{\matend}{ \end{array} \right ) } 
\newcommand{\prob}{\field{P}}
\newcommand{\expect}{\field{E}}
\newcommand{\var}{\text{Var}}
\newcommand{\bin}{\text{Bin}}
\newcommand{\dis}{\text{dis}}
\newcommand{\spn}[1]{\langle {#1} \rangle}
\newcommand{\inspn}{\mathcal{I}}
\newcommand{\indc}{\mathbf{1}}
\newcommand{\subtorusexists}{\mathcal{C}}
\newcommand{\sublat}{\mathcal{C}}
\newcommand{\perfect}[1]{{\mathcal{L}^*_{#1}}}
\newcommand{\torus}{\mathcal{F}}
\newcommand{\bigo}[1]{O\left( {#1}\right)}
\newcommand{\ceil}[1]{\lceil #1 \rceil}
\newcommand{\floor}[1]{\lfloor #1 \rfloor}
\newcommand{\nbr}{\mathcal{N}}
\newcommand{\binspn}{\mathcal{J}}
\newcommand{\vleft}{\mathcal{V}}
\newcommand{\vright}{\mathcal{W}}
\begin{document}
\begin{abstract}
	{This paper analyzes various questions pertaining to bootstrap percolation on the $d$-dimensional Hamming torus where each node is open with probability $p$ and the percolation threshold is 2.  For each $d'<d$ we find the critical exponent for the event that a $d'$-dimensional subtorus becomes open and compute the limiting value of its probability under the critical scaling.  For even $d'$, we use the Chen-Stein method to show that the number of $d'$-dimensional subtori that become open can be approximated by a Poisson random variable.}
\end{abstract}\title{Bootstrap Percolation on the Hamming Torus with Threshold 2}

\author{Erik Slivken} 

\subjclass[2010]{60K35}
\keywords{Bootstrap Percolation}

\maketitle


Bootstrap percolation first appeared in a paper by Chalupa et al \cite{bethe} as a model for ferromagnetism.  Adler et al\cite{brazil} provide a wonderful introduction to the subject.  

The process takes place on a graph $G=(V,E)$ with vertex set $V$ and edge set $E$ and depends on a parameter $\theta$ which we call the {\it threshold}.  Each vertex in the graph is initialized to one of two states, either open or closed.  At each subsequent step a vertex becomes open if at least $\theta$ of its neighbors are open.  Once open, a vertex remains open.  

Let $\omega \in \{0,1\}^V$ denote a configuration of the vertices.  If a vertex $v\in V$ satisfies $\omega(v)=1$, we say $v$ is open.  Similarly, if $\omega(v) = 0$, we say the vertex is closed.  For bootstrap percolation with threshold $\theta$ and initial configuration $\omega_0$, we construct a sequence of configurations $\{\omega_t\}_{t\geq 0}$ as follows:

\beqlbl
\omega_{t+1}(v) = \left \{\begin{array}{rl}
1, & \omega_t(v) = 1\text{ or } \sum_{v' \sim v} \omega_t(v') \geq \theta\\
0, & \text{otherwise}
\end{array} \right . 
\eeqlbl 
where $v' \sim v$ if there is an edge in $E$ connecting $v$ and $v'.$

In this paper we will assume that the probability that $\{\omega_0(v)\}_{v\in V}$ are independent Bernoulli($p$) random variables for each $v$.  Given some initial configuration, we can ask what the evolved configuration will look like after some time.  In particular we care about the steady state, $\omega_\infty := \limsup_{t\to \infty}\omega_t.$  Given a distribution on $\omega_0$ what can we say about $\omega_\infty?$

The first rigorous results came from van Enter \cite{vanenter} and later Schonmann \cite{schonmann}.  They showed that there is no non-trivial phase transition on the infinite lattice $\mathbb{Z}^d$ with edges connecting each vertex to its $2d$ nearest neighbors. For $\theta\leq d,$ if $p>0$, then with probability $1$, every point eventually becomes open.  If $\theta > d$ then everything becomes completely open with positive probability only if $p=1.$

The next big step in the history of bootstrap percolation was to view the process on a family of finite graphs $\mathcal{G} = \{G_n = (V_n,E_n)\}$ where the probability that a vertex is initially open is given by a function of $n$, $p(n)$.  As each graph is finite, for any increasing event $A$, $f_A(p(n)):=\prob_{p(n)}(A)$ is an increasing polynomial in $p(n)$ with $f_A(0) = 0$ and $f_A(1) =1$.  By continuity, for each $\alpha \in [0,1]$ there is some $p_\alpha(A,n)$, such that $f_A(p_\alpha(A,n))=\alpha.$  As is customary, we let $p_c(A,n)$ denote the critical probability $p_{1/2}(A,n).$  For the remainder of the paper we will suppress the dependence on $n$ so that $p = p(n)$ and, similarly, $p_c(A) = p_c(A,n)$ or $p_c = p_c(n)$ when $A$ is unambiguous.  All limits will be as $n$ tends to infinity unless otherwise specified.        

We say there is a sharp phase transition for an increasing event $A$ if a small perturbation from the critical probability drastically changes the probability of $A$.  More formally the phase transition is sharp if for any $\epsilon \in (0,1),$
$$p_{1-\epsilon} -p_{\epsilon} = o(p_c).$$ Friedgut and Kalai \cite{FriKal} investigate this phenomenon in some generality.  

For an increasing event $A$, we say that $\gamma$ is a critical exponent for $A$ if for any $\epsilon > 0$ $$\prob_p( A ) \to \left\{ \begin{array}{lr} 1, & p >n^{-\gamma+\epsilon}\\0, & p<n^{-\gamma-\epsilon}\end{array}\right . .$$  If $\gamma$ is the critical exponent of $A$, then for all $\epsilon >0$, $n^{-\gamma-\epsilon} < p_c(A) < n^{-\gamma + \epsilon}.$

Many results concern the event $\sublat:=\{\omega_\infty \equiv \indc\}$ and the corresponding critical probability $p_c = p_c(\sublat).$  Aizenman and Lebowitz \cite{aizenman} showed for the finite $d$-dimensional grid, $[n]^d$, and threshold $\theta=2$, there exists constants $c_1,c_2$ such that $c_1< (\log n)^{d-1} p_c < c_2.$  Moreoever, they show that the phase transition is sharp.  

In a widely celebrated paper Holroyd \cite{holroyd} showed that for $d=\theta =2$ $$p_c\sim \pi^2/18\log n.$$   Later this result was expanded by Holroyd, Ligget, and Romik \cite{hlr} \ to $d=2, \theta =k+1$ where the neighborhood of a vertex is the $k$ closest vertices in each of the cardinal directions.  They show $p_c\sim \pi^2/(3(k+2)(k+1)\log n)$ for this graph.  These types of results have been extended to higher dimensions by \cite{bbmhighdim}, hypercubes \cite{BB:2006}, random graphs \cite{bolaghpittel}, and more geometric settings \cite{bollobasriordan}.  This is a very active area of research.
   
Our graph of interest is the $d$-dimensional Hamming torus.  The Hamming torus has the same vertex set as the finite $d$-dimensional grid, $V = [n]^d$, but the edge set is modified so that $$E := \{ ( v,w ) :  v  \text{ differs from } w \text{ in exactly one coordinate } \}.$$
Gravner et al. \cite{ghps} introduced the study of bootstrap percolation on the Hamming torus.  For general thresholds $\theta\geq 2$ they investigate the critical probability, $p_c$.  The large neighborhood size of a vertex in the Hamming torus makes the behavior of $p_c$ rather different from that of the nearest neighbor counterparts.  Their results suggest $p_c$ is on the order of $n^{-\alpha}$ for some positive constant $\alpha.$    

  They also consider finer structure, which we now introduce.  
\begin{definition}\label{def.subtorus}
A subset $V\subset [n]^d$ is a {\bf subtorus} if there exists a set of indices $I(V)$ and constants $\{ \alpha_l\}_{l\in I(V)}$ such that $v\in V$ if and only if for all $l \in I(V), v_l=\alpha_l.$  For fixed $d$, we say $V$ has dimension $i$ if $|I(V)| = d-i$ and denote by $\torus_i$ the collection of all such subtori.
  
\end{definition} 
For $0 \leq i \leq  d$, they study the events
$$\sublat_{i} = \{ \exists   V \in \torus_i\ s.t. \ \omega_\infty|_V \equiv \indc \}.$$ Following the notation in \cite{ghps} let $p_c(\theta,i,d)$ be the critical probability for the event $\sublat_i$ on the $d$-dimensional Hamming torus.   Gravner et al. show for $d=2$ and any $\theta\geq 2$ that $p_c(\theta,1,2) = p_c(\theta,2,2)$ and for any $p$, $$\prob_{p}(\{ \omega_\infty \not\equiv \omega_0 \} \backslash \{\sublat_d\}) = o(1).$$ 

For $d=\theta=3,$ and $p=an^{-2}$ they compute a precise limiting value of $\prob_p(\sublat_3)$ that varies continuously from $0$ to $1$ as $a$ increases from $0$ to infinity.  In particular, the transition is not sharp.  For larger $d$ and $\theta$ they prove upper and lower bounds on the critical exponent for $\sublat_d$, provided it exists.  For large enough $d$ and $\theta$ they show this is different than the critical exponent of $\sublat_1.$  

We consider the case $\theta =2$ and $d>2.$  The case where $d=2$ is well understood. (See Figure \ref{fig1} for a picture of the process with $d=\theta=2$).  We give a very precise description of the fine structure of this dynamics.

For fixed $d>2$, define $$J_d = \max\{j : j(j+1) < d\}.$$
We show that the critical exponents for $\sublat_2,$ $\sublat_4, \cdots, \sublat_{2J_d}$ are distinct.  We also show for every $j$ such that $2\leq 2j \leq d, $ the critical exponent for $\sublat_{2j}$ and $\sublat_{2j-1}$ are the same and for any $p\in [0,1]$, $$\prob_p(\sublat_{2j-1} \backslash \sublat_{2j} ) \to 0.$$  

If $(J_d+1)(J_d+2) >d$, then, for all $p\in [0,1]$, we have $\prob_p(\sublat_{2J_d} \backslash \sublat_{d}) \to 0.$  Whereas if $(J_d+1)(J_d+2)=d$ then $\sublat_{2J_d}$ and $\sublat_{d}$ have the same critical exponent, but for certain values of $p$, $\prob_p(\sublat_{2J_d}\backslash\sublat_d)$ is bounded above a small positive constant when $\prob_p(\sublat_{2J_d})$ has a positive limit.

  After we determine the critical exponent for these events, we give a precise description of the asymptotics of $p_c(\sublat_i).$  Unlike the threshold functions for the grid $[n]^d$ found in 
  \cite{bbmhighdim}, $p_c(\sublat_i)$ is not sharp.  Understanding these precise asymptotics helps with understanding how a typical configuration evolves which in turn should be useful when studying larger $\theta.$     
  
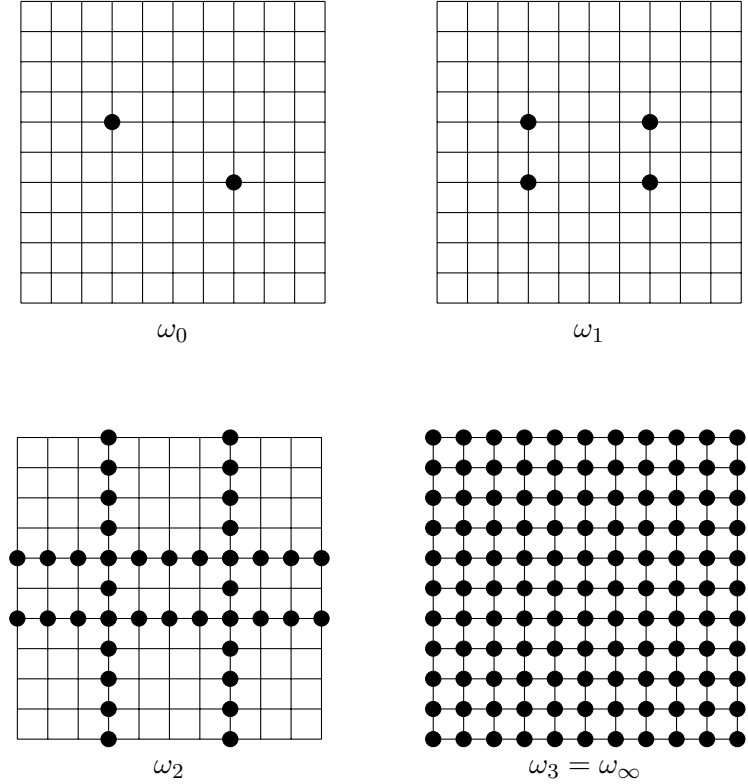
\begin{figure}\centering

\begin{tikzpicture}[scale = .4]
\filldraw[white] (0,0) circle (.25cm);
\filldraw[white] (10,0) circle (.25cm);
\draw (0,0) grid (10,10);
\filldraw[black] (3,6) circle (.25cm);
\filldraw[black] (7,4) circle (.25cm);
\draw (5,-1) node {$\omega_0$};
\end{tikzpicture}
\hspace{1cm}
\begin{tikzpicture}[scale = .4]
\filldraw[white] (0,0) circle (.25cm);
\draw (0,0) grid (10,10);
\filldraw[black] (3,6) circle (.25cm);
\filldraw[black] (7,4) circle (.25cm);
\filldraw[black] (7,6) circle (.25cm);
\filldraw[black] (3,4) circle (.25cm);
\draw (5,-1) node {$\omega_1$};
\end{tikzpicture} 

\vspace{1cm}
\begin{tikzpicture}[scale = .4]
\draw (0,0) grid (10,10);
\foreach \i in {0,...,10}
{
\filldraw[black] (3,\i) circle (.25cm);
\filldraw[black] (7,\i) circle (.25cm);
\filldraw[black] (\i,4) circle (.25cm);
\filldraw[black] (\i,6) circle (.25cm);
}
\draw (5,-1) node {$\omega_2$};
\end{tikzpicture}
\hspace{1cm}
\begin{tikzpicture}[scale = .4]
\draw (0,0) grid (10,10);
\foreach \i in {0,...,10}
{
	\foreach \j in {0,...,10}
	{
	\filldraw[black] (\j,\i) circle (.25cm);
	}
}
\draw (5,-1) node {$\omega_3 = \omega_\infty$};
\end{tikzpicture}

\caption{The bootstrap percolation process with threshold $\theta=2$ starting with two non-colinear open nodes.}
\label{fig1}
\end{figure}
\section{Statements}

First, we need a few definitions.  We will identify $\omega_t$ with the set $\{v: \omega_t(v) =1\}.$ 

\begin{definition}\label{def.internallyspanned}

For a set of nodes, $S$, we define their {\bf span}, $\spn{S},$ to be the set $\omega_\infty$ of eventually occupied points starting from $\omega_0 =S.$  We say $V$ is {\bf internally spanned} by $S$ if $V = \spn{S\cap V}.$  

\end{definition}

For arbitrary $\omega_0$ we consider the following events:

\begin{itemize}
\item $\inspn_{V} = \{\omega_0 \text { internally spans } V\}$,
\item $\inspn_{i}  = \{ \exists \  V \in \torus_i \ s.t. \ \inspn_V$ occurs $\} = \bigcup_{V\in\torus_i} \inspn_V,$ 
\item $\sublat_{i}  = \{ \exists \  V\in \torus_i \ s.t.\ \omega_\infty|_V \equiv \indc \}.$
\end{itemize}

Note the slight difference in the definitions of $\inspn_i$ and $\sublat_i$.  For $\sublat_i$ the only thing that matters is the final state $\omega_\infty$ where for $\inspn_i$ it is important how one gets to $\omega_\infty.$   

For the remainder of this paper we drop the parameter $\theta$ as it will always be $2$.  Throughout the paper we will assume $d>2$ as that case was answered completely for all $\theta$ in \cite{ghps}.  For $d>2$, and $0\leq i \leq d$ denote the threshold functions of $\inspn_i$ and $\sublat_i$ by $p_c(\inspn_i)$ and $p_c(\sublat_i)$ respectively.  Much of the work in this paper is in finding bounds for the threshold function for $\inspn_i.$  Then we show that $p_c(\sublat_i) $ will have the same asymptotic behavior as $p_c(\inspn_i)$ when $i$ is even.

Now we are in a position to state our main results.  To shorten the statements of the following theorems we define $$\lambda(j,d,a) := {d \choose 2j}(2j)!2^{-j-1}a^{j+1}.$$
\begin{theorem} \label{theorem1}

Fix $d>2$ and $j\leq J_d,$ and let $p = an^{-d/(j+1)-j}.$ Then 

\beqlbl  \label{part1}
\prob_p(\inspn_{2j})\to 1 - e^{-\lambda(j,d,a)},
\eeqlbl
and
\beqlbl \label{part2}
\prob_p( \sublat_{2j}\backslash \inspn_{2j} ) \to 0.
\eeqlbl

\end{theorem}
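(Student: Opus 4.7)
The plan is to carry out a Poisson approximation for the number of $2j$-dimensional subtori that are minimally internally spanned by $\omega_0$, and then to reduce $\sublat_{2j}\setminus\inspn_{2j}$ to a negligible external-spanning event. Let $X$ denote the number of subtori $V \in \torus_{2j}$ such that $\omega_0 \cap V$ contains a minimal internally spanning subset of $V$; by monotonicity of the bootstrap rule, $X \geq 1$ exactly when $\inspn_{2j}$ occurs. I would first classify the minimal spanners. Generalizing Figure~\ref{fig1}, I expect (and would verify by induction on $j$) that the smallest minimal internally spanning subsets of a $2j$-dimensional subtorus have exactly $j+1$ points with a ``chained'' combinatorial structure: the first two points agree on $2(j-1)$ of the $2j$ free coordinates and differ in the remaining pair, internally spanning a $2$-dimensional slice; each subsequent point is transverse to the cascading span in a fresh pair of coordinates. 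A careful count of such tuples per subtorus, symmetrizing over chaining order and coordinate-pair partition, yields $\sim (2j)!\,2^{-j-1}\, n^{(j+1)(j+2)-2}$ configurations. Multiplying by the $\binom{d}{2j}\,n^{d-2j}$ choices of subtorus and by $p^{j+1}$ with $p = an^{-d/(j+1)-j}$ collapses the powers of $n$ exactly, producing $\expect[X] \to \lambda_j$.

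To upgrade this to the distributional limit, I would use factorial moments or the Chen--Stein method. Two disjoint subtori contribute independently, while two overlapping subtori force strictly more than $2(j+1)$ open points; the resulting expected count carries an extra factor of $p$, so it is $o(1)$ once one invokes the condition $j \leq J$ (equivalently $j(j+1)<d$), which makes $n^{2j}p = a n^{j-d/(j+1)} = o(1)$. The same estimate shows that minimal spanners of size larger than $j+1$ contribute only $o(1)$ to the expected count. Therefore $X$ converges in distribution to a $\mathrm{Poisson}(\lambda_j)$ random variable, and $\prob_p(\inspn_{2j}) \to 1 - e^{-\lambda_j}$, proving (\ref{part1}).

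For (\ref{part2}), suppose $V \in \torus_{2j}$ satisfies $\omega_\infty|_V \equiv \indc$ while $\omega_0 \cap V$ does not internally span $V$. Then at some time a vertex $v \in V$ must be first opened through a neighbor outside $V$, forcing at least one additional open vertex of $\omega_0$ in a strictly larger subtorus $V' \supsetneq V$ of dimension $i > 2j$. A union bound over the extension types $V' \in \torus_i$ bounds $\prob_p(\sublat_{2j} \setminus \inspn_{2j})$ by an expected count that carries at least one extra factor of $p$ (for the extra open vertex outside $V$) times at most $n^{O(1)}$ (for the location of $V'$), which is $o(1)$ under the prescribed scaling.

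The main technical hurdle is the structural step in part (\ref{part1}): classifying all $(j+1)$-point minimal spanning configurations, avoiding double-counting of configurations that admit multiple chained decompositions, and extracting the precise leading coefficient $(2j)!\,2^{-j-1}$. The Poisson convergence and the external-spanning bound for part (\ref{part2}) are then more routine, though they require careful case analysis of how open vertices outside $V$ can propagate inward without leaving an internally spanned witness.
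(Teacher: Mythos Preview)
Your overall plan coincides with the paper's: Chen--Stein for (\ref{part1}) and a reduction to higher-dimensional internal spanning for (\ref{part2}). Two of the steps as stated, however, do not go through.

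For the dependence term in Chen--Stein, the claim that two overlapping $2j$-subtori $V,W$ ``force strictly more than $2(j+1)$ open points'' is backwards. If $\dim(V\cap W)=r$ with $l=\lceil r/2\rceil$, both can be internally spanned with as few as $(j+1)+(j-l)\le 2j+1$ points: a minimal spanner for $V$ together with $j-l$ further points growing $W$ out of $V\cap W$. The events $\inspn_V$ and $\inspn_W$ are positively correlated, so $p_{VW}$ can exceed $M_{2j}^2$, and the one-line appeal to $n^{2j}p\to 0$ does not control $\sum_V\sum_{W\in\Gamma_V\setminus\{V\}}p_{VW}$. The paper handles this via a dedicated intersection estimate (Lemma~\ref{intersection}) showing $p_{VW}=o(n^{r-d})$ for each overlap dimension $r$, which exactly balances the $O(n^{d-r})$ count of pairs at that $r$; this in turn rests on a conditional growth bound (Lemma~\ref{condproblem}) proved by a substantial parity-by-parity case analysis.

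For (\ref{part2}), ``one extra factor of $p$ times at most $n^{O(1)}$'' does not close: the additional initially-open vertex outside $V$ ranges over $\Theta(n^{i})$ locations in a containing $i$-torus, and already for $i=2j+1$ one has $n^{2j+1}p=an^{\,j+1-d/(j+1)}$, which need not tend to $0$ when $j=J$. The paper's argument is structural rather than incremental. By Lemma~\ref{lemma.maxinspn}, every maximal open subtorus in $\omega_\infty$ is internally spanned, so on $\sublat_{2j}\setminus\inspn_{2j}$ there is some $V\in\torus_b$ internally spanned with $b>2j$ and $b$ \emph{minimal}. Minimality then forces both pieces $V_1,V_2$ in the decomposition of Lemma~\ref{lemma.inspnbreakdown} to have dimension strictly below $2j$, so the already-established asymptotics for $M_t$ with $t<2j$ apply, and a union bound over $(t_1,t_2)$ gives $\prob_p(\inspn_V\setminus\inspn_{2j})=O(n^{b-d-1})$; summing over $V\in\torus_b$ and then over $b$ yields $O(n^{-1})$.
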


In fact to prove Theorem \ref{theorem1} part \ref{part1} we prove a stronger result on Poisson convergence by an application of the Chen-Stein method \cite{ross.steinmethod}.  For two non-negative integer valued random variables $Y$ and $Z$ the total variation is defined as $$d_{TV}(Y,Z) = \frac{1}{2}\sum_{k=0}^\infty |\prob( Y=k ) - \prob( Z=k )|.$$ 

\begin{theorem} \label{theorem3}
Fix $d > 2.$ Let $ j \leq J_d$, $p =an^{-d/(j+1) - j},$ and $\lambda(j,d,a).$  Let $Y_j$ denote the number of subtori $V\in \torus_{2j}$ such that  $\inspn_V$ occurs, and let $Z_j$ denote a Poisson($\lambda(j,d,a)$) random variable.  Then $$\lim_{n\to \infty} d_{TV}(Y_j,Z_j) \to 0.$$

\end{theorem}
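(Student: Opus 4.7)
The plan is to apply the Chen-Stein method from \cite{ross.steinmethod} to the sum
\[
Y_j = \sum_{V \in \torus_{2j}} X_V, \qquad X_V := \indc_{\inspn_V}.
\]
Since $\inspn_V = \{V = \spn{\omega_0 \cap V}\}$ depends only on $\omega_0 \cap V$, the variable $X_V$ is independent of $\sigma(X_W : V \cap W = \emptyset)$. Accordingly I would take the dependency neighborhood $B_V := \{W \in \torus_{2j} : V \cap W \neq \emptyset\}$, so that Chen-Stein bounds $d_{TV}(Y_j, Z_j)$ by a constant multiple of $b_1 + b_2$, where
\[
b_1 = \sum_V \prob(X_V) \sum_{W \in B_V}\prob(X_W), \qquad b_2 = \sum_V \sum_{W \in B_V \setminus \{V\}}\expect[X_V X_W].
\]
It thus suffices to show that both double sums vanish as $n \to \infty$.

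Before attacking the double sums I would record the first-moment estimate $\expect Y_j = |\torus_{2j}|\,\prob(\inspn_V) \to \lambda_j$, which is already produced in proving Theorem \ref{theorem1}(\ref{part1}) and supplies $\prob(\inspn_V) \sim \lambda_j/|\torus_{2j}|$ uniformly in $V$. To bound $b_1$, I would enumerate $W \in B_V$ by $k = |I(V) \cap I(W)|$, the number of shared constrained coordinates. Two subtori with $I(V) = I(W)$ are either identical or parallel, so only $k \leq d - 2j - 1$ contributes to $B_V \setminus \{V\}$, and for each such $k$ the count of admissible $W$ is $\binom{d-2j}{k}\binom{2j}{d-2j-k}n^{d-2j-k}$. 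Combined with the marginal estimate, this reduces $b_1$ to an explicit combinatorial sum to be checked against the scaling $p = an^{-d/(j+1)-j}$.

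The heart of the proof is $b_2$, where the joint probability $\prob(\inspn_V \cap \inspn_W)$ must be estimated directly, because initial opens on $V \cap W$ can support both spanning events simultaneously. For each intersecting pair I would decompose $\omega_0$ according to the three regions $V \setminus W$, $W \setminus V$, and $V \cap W$, and enumerate joint spanning configurations. A spanning event for $V$ is witnessed by a $(j+1)$-point configuration in general position inside $V$, and similarly for $W$. I expect the dominant contribution to come from using entirely disjoint witnesses inside $V \setminus W$ and $W \setminus V$, yielding $\prob(\inspn_V \cap \inspn_W) = \prob(\inspn_V)\prob(\inspn_W)(1 + o(1))$, while every configuration that reuses opens on $V \cap W$ should contribute a strictly lower-order power of $n$ and hence be absorbed into the error.

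The main obstacle is the combinatorial bookkeeping in the $b_2$ step: for each possible dimension $\dim(V \cap W) = 4j - d + k$ one must identify the optimal way to distribute the $2(j+1)$ witness points among the three regions, and verify that every distribution with witnesses supported on $V \cap W$ is genuinely sub-leading once $p = an^{-d/(j+1)-j}$ is substituted. The restriction $j \leq J_d$, equivalently $j(j+1) < d$, is exactly what forces $(j+1)$-point witnesses to be rare inside any lower-dimensional subtorus and makes this comparison go through; without it the shared-witness configurations become competitive and Poisson approximation fails.
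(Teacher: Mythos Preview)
Your choice of the Arratia--Goldstein--Gordon $b_1 + b_2$ bound fails whenever $d \le 4j$, which is consistent with $j \le J$ for $(j,d) \in \{(1,3),(1,4),(2,7),(2,8)\}$. Take $d = 3$, $j = 1$: any two distinct planes in $[n]^3$ meet in a line, so $B_V = \torus_2$ for every $V$, and
\[
b_1 \;=\; \sum_{V}\prob(X_V)\sum_{W\in B_V}\prob(X_W) \;=\; \bigl(|\torus_2|\,M_2\bigr)^2 \;\longrightarrow\; \lambda_1^2 \;>\; 0.
\]
Since $\inspn_V$ and $\inspn_W$ are increasing events on a product space, FKG gives $p_{VW} \ge p_V p_W$, so $b_2 \ge b_1 - \sum_V p_V^2$ inherits the same defect; indeed your own heuristic $p_{VW} = p_V p_W(1+o(1))$ is exactly the statement that $b_2 \not\to 0$. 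The dependency neighbourhoods are simply too large for this form of Chen--Stein.

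The paper sidesteps this by invoking the \emph{positively-related} (size-bias coupling) version of Chen--Stein, available precisely because each $X_V$ is the indicator of an increasing event. In that formulation the total-variation bound is controlled by $\sum_V p_V^2$ together with the covariances $p_{VW} - p_V p_W$, rather than by $p_{VW}$ itself. The first sum is $|\torus_{2j}|\,M_{2j}^2 = O(M_{2j}) \to 0$ trivially. For the covariance term your shared-witness intuition is exactly the right tool: the disjoint-witness contribution to $p_{VW}$ cancels against $p_V p_W$, and only configurations that reuse open vertices on $V\cap W$ survive, which is where the restriction $j(j+1) < d$ does its work. The paper carries out the requisite second-moment estimate via its Lemma~\ref{intersection}, stratifying $\prob_p(\inspn_V \cap \inspn_W)$ by $r = \dim(V \cap W)$; your three-region decomposition would get to the same combinatorics, but the target must be $p_{VW} - p_V p_W$, not $p_{VW}$.
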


The precision given by Theorem \ref{theorem3} leads to the following results:

\begin{theorem} \label{theorem4}

Fix $d>2$ such that $d<(J_d+1)(J_d+2)$ and let $p=an^{-d/(J_d+1) - J_d }$.   Then

$$ \prob_p(\inspn_{2J_d} \backslash \inspn_d )\to 0,$$ so $$\prob_p(\sublat_d) =\prob_p(\inspn_d ) \to 1- e^{-\lambda(J_d,d,a)}.$$

\end{theorem}

\begin{theorem} \label{theorem5}

Fix $J_d\geq 1$ and let  $d = (J_d+1)(J_d+2),$ $p = an^{-2J_d-2}.$  There exists positive constants $0<c_1,c_2<1 - e^{-\lambda(J_d,d,a)}$ such that for all large enough $n$

\begin{align}
&\prob_p(\inspn_{2J_d+2} ) > c_1\label{thm51} \\
& \prob_p( \inspn_{2J_d} \backslash \inspn_{2J_d+2} ) > c_2 \label{thm52}\\
\intertext{ and }
& \prob_p(\inspn_{2J_d+2} \backslash \inspn_{d} ) \to 0. \label{thm53}
\end{align}
\end{theorem}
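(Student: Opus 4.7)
My plan is to extend the Chen--Stein machinery behind Theorem~\ref{theorem3} to the boundary case $d=(J+1)(J+2)$ and combine it with a quotient cascade for \eqref{thm53}. The key observation is that at $p=an^{-2J-2}$ both $j=J$ and $j=J+1$ simultaneously hit their critical scaling $n^{-d/(j+1)-j}$: indeed $d/(J+1)+J=(J+2)+J=2J+2$ and $d/(J+2)+(J+1)=(J+1)+(J+1)=2J+2$. So the Chen--Stein estimates driving Theorem~\ref{theorem3} for $j\le J$ should also apply at $j=J+1$. Running the method on the joint indicator family $\{\indc_{\inspn_V}:V\in\torus_{2J}\cup\torus_{2J+2}\}$ yields the joint limit
\[
(Y_J,Y_{J+1})\longrightarrow(\mathrm{Pois}(\lambda_J),\mathrm{Pois}(\lambda_{J+1}))
\]
with independent components; the cross-dependency terms between $\torus_{2J}$ and $\torus_{2J+2}$ vanish under the same geometric bookkeeping already used intra-family in the proof of Theorem~\ref{theorem3}.

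From the joint limit, parts \eqref{thm51} and \eqref{thm52} follow: $\prob(\inspn_{2J+2})\to 1-e^{-\lambda_{J+1}}$ and $\prob(\inspn_{2J}\setminus\inspn_{2J+2})=\prob(Y_J\ge 1,Y_{J+1}=0)\to (1-e^{-\lambda_J})e^{-\lambda_{J+1}}$, both strictly positive for every $a>0$, and the required $0<c_1<c_2<1-e^{-\lambda_J}$ is always compatible with the finer bounds $c_1<(1-e^{-\lambda_J})e^{-\lambda_{J+1}}$ and $c_2<\min\{1-e^{-\lambda_J},1-e^{-\lambda_{J+1}}\}$. For \eqref{thm53} I apply the union bound
\[
\prob(\inspn_{2J+2}\setminus\inspn_d)\le \sum_{V_0\in\torus_{2J+2}}\prob(\inspn_{V_0})\,\prob(\omega_\infty\ne\indc\mid \inspn_{V_0}),
\]
whose prefactor is $\expect[Y_{J+1}]\to\lambda_{J+1}$, so (by torus symmetry) it suffices to show $\prob(\omega_\infty\ne\indc\mid \inspn_{V_0})\to 0$. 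Since $\inspn_{V_0}$ depends only on $\omega_0\cap V_0$, the seeds outside $V_0$ remain independent Bernoulli($p$). I then run a \emph{quotient cascade}: the $(2J+2)$-dim subtori parallel to $V_0$ are indexed by $Q=[n]^{J(J+1)}$ with $V_0$ at the origin, and each non-origin cell carries an initial seed (an $\omega_0$-open vertex) with probability $\to 1-e^{-a}$ independently. Two dynamical rules are available: (i)~a cell with two fully-open quotient-neighbours opens in one bootstrap step, because each of its vertices has two open external neighbours; (ii)~a cell with one fully-open quotient-neighbour and an internal seed opens by an in-cell cascade, since the free open external neighbour plus the seed effectively reduce the internal threshold to~$1$. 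Rule~(ii) opens at least one non-origin cell on each of the $J(J+1)$ coordinate axes through $V_0$ with probability $\ge 1-e^{-a(n-1)}$; rule~(i) then fills each axis, and inductively each $k$-dimensional coordinate face through the origin, so $Q$ fills with probability $\to 1$.

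The main obstacle is executing the quotient cascade in \eqref{thm53}: one must verify that the $J(J+1)$ axis-opening events and the subsequent face-by-face induction all succeed with probability $\to 1$ simultaneously and uniformly in $V_0$, and that rule~(ii)'s within-cell cascade really runs to completion without interference from competing partial fills elsewhere in the torus. Steps~1 and~2 are comparatively routine: the joint Poisson limit is a direct enlargement of the indicator family of Theorem~\ref{theorem3}, and the new cross-dependency terms are handled by the same combinatorics already used for pairs inside $\torus_{2J}$ alone.
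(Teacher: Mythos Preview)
Your approach to \eqref{thm51} and \eqref{thm52} has a genuine gap: the claimed joint Poisson limit $(Y_J,Y_{J+1})\to(\mathrm{Pois}(\lambda_J),\mathrm{Pois}(\lambda_{J+1}))$ with independent components is false. The Chen--Stein machinery of Theorem~\ref{theorem3} does not extend to $j=J+1$ because Lemma~\ref{lemma.ealpha} breaks there: for $W\in\torus_{2J+2}$ the expected number of seeds in $W$ is $n^{2J+2}p=a$, not $o(1)$, so the factor $(1-p)^{n^{2J+2}}$ is $e^{-a}$ rather than $1-o(1)$. Concretely, the dominant mechanism for $\inspn_W$ is an internally spanned $V\subset W$ with $V\in\torus_{2J}$ together with \emph{any} seed in $W\setminus V$; the latter event has probability $1-e^{-a}$, not $n^{2J+2}p=a$ as the formula for $\lambda_{J+1}$ would encode. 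Working this through gives $\expect[Y_{J+1}]\to\tfrac{1-e^{-a}}{a}\,\lambda_{J+1}\ne\lambda_{J+1}$.

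The independence claim fails even more visibly, and precisely at the step you flag as routine. For $V\in\torus_{2J}$ and $W\in\torus_{2J+2}$ with $V\subset W$, the cross term satisfies $p_{VW}=\prob(\inspn_V\cap\inspn_W)\ge M_{2J}(1-e^{-a})$; summing over all such nested pairs contributes $\binom{d-2J}{2}(1-e^{-a})\lambda_J$ to the Chen--Stein bound, which does \emph{not} vanish. Structurally, $\{Y_J=0\}$ forces $\{Y_{J+1}=0\}$ up to $O(n^{-1})$ (this is exactly the estimate $\prob(\inspn_{2J+2}\setminus\tilde\inspn_{2J+2})=O(n^{-1})$ in the paper), so $Y_{J+1}$ is very far from independent of $Y_J$. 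Indeed the paper's upper bound $\prob(\inspn_{2J+2})\le 1-e^{-\lambda_J(1-e^{-ac})}+o(1)$ is strictly smaller than your claimed limit $1-e^{-\lambda_{J+1}}$ (since $1-e^{-ac}<ac$ and $\lambda_{J+1}=ac\lambda_J$); for large $a$ your limit would even exceed $1-e^{-\lambda_J}=\lim\prob(\inspn_{2J})$, impossible since $\inspn_{2J+2}\subset\inspn_{2J}$ a.a.s. The paper's route is instead to condition on the exact collection of internally spanned $2J$-tori, use FKG to upper-bound the chance of a nearby extra seed, and use sprinkling for the lower bound---yielding non-matching but positive constants $c_1,c_2$ rather than an exact limit.

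Your quotient-cascade argument for \eqref{thm53} is correct and arguably cleaner than the paper's sprinkling; once one knows $\expect[Y_{J+1}]=O(1)$ (which does hold, just not with the value $\lambda_{J+1}$), the conditional argument goes through exactly as you describe.
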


The following theorem highlights how $J_d=1$ is different from higher $J_d$ when $d= (J_d+1)(J_d+2)$.
\begin{theorem} \label{theorem6}

Fix $J_d \geq 1$ and let $d = (J_d+1)(J_d+2)$ and $p = an^{-2J_d-2}.$ If $J_d>1$ then
\begin{align}
&\prob_p( \inspn_{d} \backslash \inspn_{2J_d+2} )\to 0, \label{thm62}\\
\intertext{whereas if $J_d=1$, then there exists $c>0$ such that for large enough $n$, }
&\prob_p( \inspn_{6} \backslash \inspn_{4} ) > c. \label{thm63}
\end{align}
\end{theorem}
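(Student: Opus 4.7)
The plan splits according to the value of $J$.

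For part (\ref{thm62}) with $J>1$, I aim to show that the only typical way to internally span $[n]^d$ is through an internally spanned $(2J+2)$-dimensional subtorus. Following the decomposition of spans developed in \cite{ghps}, the open set at any time in the bootstrap process can be written as a union of maximal internally spanned subtori, and at scale $p=an^{-2J-2}$ the only internally spanned subtori appearing with probability bounded away from zero are those of even dimension at most $2J$. If $\inspn_{2J+2}$ does not occur, any two such subtori $W_1,W_2$ that are \emph{compatible} (their prescribed values agree on the shared fixed coordinates $I(W_1)\cap I(W_2)$) must produce a join of dimension at most $2J$; since $|I(W_i)|\geq d-2J$ for each $i$, the constraint $|I(W_1)\cap I(W_2)|\geq d-2J$ forces $I(W_1)=I(W_2)$, so the compatible subtori all lie in a single coset and cannot span $[n]^d$. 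One is thus reduced to collections of pairwise \emph{incompatible} internally spanned subtori (disagreeing on some shared fixed coordinate), which interact only through occasional two-neighbor events. Enumerating the possible combinatorial structures of such collections and applying first-moment bounds, the expected number of configurations that jointly span $[n]^d$ is $o(1)$ for $J>1$ because the gap $d-2J = J^2+J+2$ is then too large to be bridged by incompatible interactions at this scale; hence $\prob_p(\inspn_d\setminus \inspn_{2J+2})\to 0$.

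For part (\ref{thm63}) with $J=1$, $d=6$, the plan is to construct an explicit positive-probability event. Fix the symmetric partition $[6]=\{1,2\}\sqcup\{3,4\}\sqcup\{5,6\}$ of the coordinate set, and consider triples of $2$-dimensional subtori $V_1,V_2,V_3\in\torus_2$ with fixed-index sets $I_1=\{3,4,5,6\}$, $I_2=\{1,2,5,6\}$, $I_3=\{1,2,3,4\}$, whose prescribed values at each pairwise intersection $I_i\cap I_j$ disagree on at least one coordinate. This pairwise disagreement guarantees that no two of $V_1,V_2,V_3$ lie in a common $4$-dimensional subtorus, so their pairwise interactions do not produce any $\inspn_4$ event. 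A direct bootstrap computation—iterating the two-neighbor rule through a constant number of rounds—shows that when all three of $V_1,V_2,V_3$ are open, their span fills all of $[n]^6$. The number of such triangular triples is $\Theta(n^{12})$, and each is simultaneously internally spanned with probability $\Theta(a^6 n^{-12})$. Poisson approximation via the Chen--Stein method (as in Theorem \ref{theorem3}) then yields positive-probability existence of at least one such triple, on which $\inspn_6\setminus \inspn_4$ holds.

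The main obstacle in part (\ref{thm62}) is making the reduction to pairwise-incompatible collections rigorous and handling higher-order $k\geq 3$ interactions; this requires careful combinatorial bookkeeping over all possible arrangements of $\leq 2J$-dimensional subtori and uniform first-moment control. The main obstacle in part (\ref{thm63}) is the hand-tracing of bootstrap dynamics needed to verify that the triangular triple does span $[n]^6$: only $O(n)$ new points open in the first round, and one must follow several further rounds to complete the argument.
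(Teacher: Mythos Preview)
Your approach to both parts differs from the paper's, and in each case the paper's route is more direct.

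For part (\ref{thm62}), the paper avoids your structural compatible/incompatible dichotomy entirely. Instead it applies Lemma~\ref{lemma.inspnbreakdown} recursively: for $V\in\torus_{2J+2k}$ it bounds $N_{2J+2k}:=\prob_p(\inspn_V\setminus\inspn_{2J+2})$ by decomposing $V$ into two strictly smaller internally spanned subtori and counting. For $k=2$ the dominant contribution comes from a $2J$-dimensional and a $2$-dimensional piece, giving $N_{2J+4}=O(n^{6-d})$; for larger $k$ one climbs inductively to $N_{2J+2k}=O(n^{2k-d+2})$. Summing against $|\torus_{2J+2k}|=O(n^{d-2J-2k})$ yields a total of $O(n^{2-2J})\to 0$ for $J>1$. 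Your argument, by contrast, tries to control all chains of $\leq 2J$-dimensional subtori at once; the claim that $|I(W_1)\cap I(W_2)|\geq d-2J$ forces $I(W_1)=I(W_2)$ only treats pairs of top-dimensional subtori and does not address joins of dimension $2J+1$, nor intermediate internally spanned subtori of dimension $2J+3,2J+5,\dots$ that can appear without $\inspn_{2J+2}$ ever occurring. The paper's inductive climb in $k$ sidesteps this bookkeeping entirely.

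For part (\ref{thm63}), your three-plane triangular construction is more elaborate than necessary: the paper observes that already \emph{two} planes at Hamming distance exactly $2$---with disjoint free-index sets and disagreeing on both shared fixed coordinates---span all of $[n]^6$ directly, without any $4$-dimensional subtorus being internally spanned. It then conditions on the Poisson event $\{Y_2=2\}$, notes that a fixed positive fraction of plane-pairs are at distance exactly $2$, and multiplies by the probability that no $2$-neighbour of either plane is initially open. This last factor is essential and is missing from your outline: pairwise disagreement among the $V_i$ prevents $\inspn_4$ arising from interactions \emph{between} the planes, but does not rule out $\inspn_4$ arising from a single plane together with a stray open vertex at distance $\leq 2$. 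Your construction could be repaired by adding this conditioning, but two planes already suffice.
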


In Section 2, we prove lemmas that describe the evolution of $\omega_t$ when $\theta = 2.$  In Section 3, we prove upper and lower bounds for the probabilities of the events $\sublat_{2j}$ and $\inspn_{2j}$.  In Section 4 we use the Chen-Stein method \cite{ross.steinmethod} to describe precisely the asymptotics of $p_c(\sublat_i,d)$ and $\prob(\inspn_{2J_d}).$  In Section 5 we combine everything to prove our statements.  

%
%
%

\section{Deterministic Results}

We begin with the simplest case.  Suppose $u\neq v$ are the only nodes which are initially open.  Denote the Hamming distance between the nodes as $\dis(u,v) := \sum_{i=1}^d \indc_{u_i\neq v_i},$ the number of coordinates where $u$ and $v$ differ.  If $\dis(u,v)>2$ then no new nodes become open $\spn{\{u,v\}} = \{u,v\}$.  If $\dis(u,v) \leq 2$ then $u$ and $v$ must agree for all but at most $2$ indices.  Without loss of generality, we may assume that $u_i = v_i$ for $i > 2$.  

Suppose first that $u_2 = v_2$ as well ( i.e. $\dis(u,v) = 1)$, the line $\big\{(t, u_2, \cdots ), t\in [n]\big\}$ has two nodes initially open, and after one step every node in that line becomes open.  Every node not on the line has at most one neighbor on the line, so growth stops. 

If $\dis(u,v) = 2$, then after one step the common neighbors of $u$ and $v$,  $u' = ( u_1, v_2, \cdots)$ and $v' = (v_1, u_2, \cdots)$, become open.  The nodes $u$ and $u'$ are two different open neighbors for every closed node in the  line $\big \{( u_1, s, \cdots ): s\in [n]\big \},$ so after two steps the entire line becomes open.  The same is true for the lines containing both $u$ and $v'$, both $v$ and $u'$, and both $v'$ and $v$.  Once those lines are open every other node in the plane $\big\{ (t,s,\cdots) : (t,s) \in [n]^2\big\}$ has a at least two ( in fact four ) open neighbors, so the entire plane becomes open.  (See Figure \ref{fig1})

Growth for higher dimension subtori is a bit more involved.  First we generalize the distance function to subsets $S_1,S_2$ as follows, $$\dis(S_1,S_2) = \inf_{u\in S_1,v \in S_2}\dis(u,v).$$  

%
%
%

We will state and prove a few necessary lemmas.  The key point is that growth continues only if there are two sets of open nodes within distance 2 of each other.  

\begin{lemma}\label{lemma.spnptclosed} 

For $S\subset [n]^d$, let $\overline{S}$ denote the smallest subtorus that contains $S$.  If $V$ is a subtorus and $u$ is a node with $\dis(V,u)\leq 2$ then  $$\spn{V\cup \{u\}} = \overline{V\cup\{ u\}}.$$

\end{lemma}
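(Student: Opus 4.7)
Since any subtorus $S$ has the property that each vertex outside $S$ disagrees with its fixed coordinates on at least one index and therefore has at most one neighbor in $S$, no bootstrap step starting inside the subtorus $V' := \overline{V \cup \{u\}}$ can ever leave it; hence $\spn{V \cup \{u\}} \subseteq V'$ automatically. The task is to establish the reverse inclusion, and I would split into cases on $\dis(V,u) \in \{0, 1, 2\}$. The case $\dis = 0$ is trivial since $u \in V$ and $V' = V$.

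For $\dis(V, u) = 1$, let $l \in I(V)$ be the unique coordinate where $u_l \ne \alpha_l$; then $V'$ is obtained from $V$ by removing $l$ from the index set. First I would open the axis-parallel line $L$ through $u$ in direction $l$: the node $u_V$ obtained from $u$ by resetting its $l$-th coordinate to $\alpha_l$ lies in $V$ and is open, and every other point of $L$ has both $u$ and $u_V$ as direction-$l$ neighbors on the Hamming torus, so all of $L$ opens in a single step. Next I would show $V \cup L$ spans $V'$ by induction on $k(w) := |\{l' \notin I(V) : w_{l'} \ne u_{l'}\}|$: the base $k(w) = 0$ forces $w \in L$, and for $k(w) \ge 1$ the node $w$ has the open $V$-neighbor obtained by resetting its $l$-coordinate to $\alpha_l$, plus a second open neighbor gotten by flipping any one of the $k(w)$ disagreeing free coordinates to match $u$, which drops $k$ by one and is open by the inductive hypothesis.

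For $\dis(V, u) = 2$, let $l_1, l_2 \in I(V)$ be the disagreeing coordinates and let $v_0 \in V$ be obtained from $u$ by changing its $l_1$- and $l_2$-coordinates to $\alpha_{l_1}$ and $\alpha_{l_2}$; then $\dis(v_0, u) = 2$ and the two-point distance-$2$ argument recalled in the introduction opens the entire $2$-plane $\Pi$ through $v_0$ varying in the $(l_1, l_2)$-directions. I would then show $V \cup \Pi$ spans $V'$ by a nested induction, outer on $k(w)$ and inner on the number of $i \in \{1, 2\}$ with $w_{l_i} \ne \alpha_{l_i}$: one open neighbor always comes from flipping a disagreeing free coordinate of $w$ (dropping $k$), and the second open neighbor comes from flipping $l_1$ or $l_2$ toward $\alpha$, landing either in $V$ (when only one of $w_{l_1}, w_{l_2}$ disagrees with $\alpha$) or in a same-layer node already opened by the inner induction.

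The main obstacle is the bookkeeping in the $\dis = 2$ case, where a point $w$ with both $w_{l_1} \ne \alpha_{l_1}$ and $w_{l_2} \ne \alpha_{l_2}$ has no direct $V$-neighbor and so must draw both open neighbors from $\Pi$ together with same-layer nodes opened earlier in the inner induction; once the layers are processed in the right order, each verification collapses to exhibiting two explicit open neighbors of $w$ on the Hamming torus.
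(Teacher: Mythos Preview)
Your argument is correct. A small omission in the $\dis=1$ case: when you say ``the node $w$ has the open $V$-neighbor obtained by resetting its $l$-coordinate to $\alpha_l$,'' you implicitly assume $w_l\neq\alpha_l$; if $w_l=\alpha_l$ then $w\in V$ already, so there is nothing to prove. With that trivial case noted, the $\dis=1$ and $\dis=2$ inductions both go through exactly as you describe.

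Your route differs from the paper's. The paper inducts on $i=\dim(V)$ rather than case-splitting on $\dis(V,u)$: given $V$ of dimension $i$, it slices $V$ into $(i-1)$-dimensional hyperplanes $V_k=\{v\in V:v_k=u_k\}$ for two free indices $k=1,2$, applies the inductive hypothesis to each pair $(V_k,u)$ to open $\overline{V_k\cup\{u\}}$, and then observes that every $a\in\overline{V\cup\{u\}}$ has one neighbor in $\overline{V_1\cup\{u\}}$ and one in $\overline{V_2\cup\{u\}}$. This handles all distances $\le 2$ uniformly in a single paragraph, at the cost of being less explicit about which neighbors are used. Your approach, by contrast, never appeals to a lower-dimensional instance of the lemma; instead it builds the span outward from an explicit line or plane and exhibits two concrete open neighbors of each $w$ via the $k(w)$-induction. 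The trade-off is that you carry more bookkeeping (the nested induction in the $\dis=2$ case), but every step is a direct neighbor check rather than a recursive call.
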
   

\begin{proof}{(By induction on $i = \dim(V)$)}  We have shown that the lemma holds if $V$ has dimension $0$ (a single node).  Suppose the lemma holds for all subtori $W$ with $\dim(W) < i$.  Let $V$ be a subtorus with $\dim(V) = i$ and let $u$ be a node with $\dis(V,u)\leq2.$  Without loss of generality we assume the last $d-i$ coordinates are fixed, i.e. $I(V) = [i+1, d].$  Without loss of generality we may also assume that 

$$u \in \{ ( u_1, \cdots ,u_d ) : u_{l} = \alpha_l(V) \text{ for } l > i+2 \}.$$ Let $V_k$ denote the subtorus of $V$ that fixes the $k^{th}$ coordinate to the value $u_k$.  Then $V_k$ has dimension $i-1$ and $\dis(V_k,u)\leq 2$.  By the induction hypothesis, $\spn{V_k,u} = \overline{V_k\cup\{u\}}$.  For $a = (a_1,\cdots,a_d) \in \overline{V\cup \{u\}},$ there are two neighbors $$b= (u_1, a_2, \cdots, a_d)\in \overline{V_1\cup \{u\}} $$ and $$ c = (a_1, u_2, \cdots, a_d) \in \overline{V_2\cup\{u\}},$$ so $a$ becomes open and we can conclude $\overline{V\cup \{u\}}\subseteq \spn{V_1\cup V_2\cup \{u\}}\subseteq \spn{V\cup \{u\}}$.  By monotonicity $\spn{V\cup \{u\}} \subseteq \spn{\overline{V\cup \{u\}}} = \overline{V\cup \{u\}}$ so we have equality for the two sets.  Moreover, if $u\notin V$ then $i+1 \leq \dim( \overline{V\cup\{u\}} ) \leq i+2$.
\end{proof}

\begin{lemma}\label{lemma.spnclosed}

If $V, W$ are open subtori and $\dis(V,W) \leq 2$ then $\spn{ V\cup W} = \overline{V\cup W}$.  
\end{lemma}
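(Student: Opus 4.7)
The plan is to induct on $m := \min(\dim(V),\dim(W))$, using Lemma \ref{lemma.spnptclosed} as the base case. Without loss of generality assume $\dim(V) \leq \dim(W)$. If $\dim(V) = 0$, then $V = \{v\}$ is a single node with $\dis(W, v) \leq \dis(V, W) \leq 2$, and Lemma \ref{lemma.spnptclosed} applied to the subtorus $W$ and the node $v$ gives $\spn{V \cup W} = \overline{V \cup W}$ directly.

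For the inductive step $\dim(V) \geq 1$, I would first pick a close pair $v \in V$, $u \in W$ with $\dis(v, u) \leq 2$, choose any coordinate $k \notin I(V)$, and slice $V$ along $k$ by setting $V_s := \{x \in V : x_k = s\}$. Each $V_s$ is a subtorus of dimension $\dim(V) - 1$. Letting $s_0 := v_k$, we have $v \in V_{s_0}$, so $\dis(V_{s_0}, W) \leq 2$, and the inductive hypothesis applied to $V_{s_0}$ and $W$ yields $W' := \spn{V_{s_0} \cup W} = \overline{V_{s_0} \cup W}$.

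Next pick any $s \neq s_0$. Since $V_s$ and $V_{s_0}$ share every fixed coordinate except $k$, where their values differ, $\dis(V_s, V_{s_0}) = 1$, and because $V_{s_0} \subseteq W'$ this forces $\dis(V_s, W') \leq 1$. Applying the inductive hypothesis to $V_s$ and $W'$ gives $\spn{V_s \cup W'} = \overline{V_s \cup W'}$. A direct check of fixed coordinates shows $\overline{V_s \cup V_{s_0}} = V$ (coordinate $k$ is freed by the two distinct values $s, s_0$, while every originally fixed coordinate of $V$ remains fixed), so $\overline{V_s \cup W'} = \overline{V_s \cup V_{s_0} \cup W} = \overline{V \cup W}$. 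Since $V \cup W$ contains $V_s$ and its span contains $W'$, we conclude $\spn{V \cup W} \supseteq \spn{V_s \cup W'} = \overline{V \cup W}$; the reverse inclusion is automatic.

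The main point to monitor is that the induction parameter $m$ strictly decreases in both recursive calls: in the first, $\dim(V_{s_0}) = \dim(V) - 1$ is smaller than $\min(\dim(V), \dim(W))$; in the second, $\dim(V_s) = \dim(V) - 1 < \dim(V) \leq \dim(W) \leq \dim(W')$ again yields a strict drop. The remaining bookkeeping item is the closure identity $\overline{V_s \cup V_{s_0}} = V$, which is immediate from the characterization of a subtorus by the set of coordinates on which all of its points share a common value.
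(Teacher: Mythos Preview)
Your proof is correct but takes a different route from the paper's. The paper iteratively absorbs single points of $W$: starting from $V^0 = V$, as long as $W \not\subseteq V^{l-1}$ one picks $w_l \in W$ with $0 < \dis(V^{l-1}, w_l) \leq 2$ and applies Lemma~\ref{lemma.spnptclosed} to obtain $V^l = \overline{V^{l-1} \cup \{w_l\}}$ of strictly larger dimension; the sequence therefore stabilizes at some $V^m$ containing both $V$ and $W$ and contained in $\spn{V,W}$, forcing all the inclusions to be equalities. You instead induct on $\min(\dim V,\dim W)$ by slicing the smaller subtorus into codimension-one pieces and making two recursive calls. The paper's argument is a bit leaner in that it invokes Lemma~\ref{lemma.spnptclosed} directly rather than through an outer induction, while yours is a clean structural induction that never needs to locate a suitable point of $W$ near the current $V^{l-1}$; both ultimately rest only on Lemma~\ref{lemma.spnptclosed} together with the monotonicity and idempotence of $\spn{\cdot}$ and the closure property of $\overline{\,\cdot\,}$.
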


\begin{proof}

This is a natural extension of Lemma \ref{lemma.spnptclosed}.  By monotonicity we have $\spn{V\cup W}\subseteq \spn{\overline {V\cup W } } = \overline{V\cup W}.$  Let $V^0 = V$.  We define $V^l$ recursively.  Let $W^{l-1}$ denote the subset of $W$ that satisfies $0<\dis(V^{l-1},u) \leq 2$ for every $u \in W^{l-1}.$    For $l>0$ if $W\cap (V^{l-1})^c$ is non-empty there exists a $w_l\in W^{l-1}$.  We then define $V^l = \spn{V^{l-1},w_l}$ for some choice of $w_l$.  By Lemma \ref{lemma.spnptclosed} this is the subtorus $\overline{V^{l-1}\cup\{w_l\}}.$ Its dimension is strictly greater than $\dim(V^{l-1})$.  If $W\cap (V^{l-1})^c$ is empty then $V^l = V^{l-1}$.   

Since $\{V^l\}$ is an increasing sequence of subtori bounded by $\overline{V\cup W}$ it must stabilize to some subtorus $V^m$ in a finite number of steps.  By definition $V\subseteq V^m$, and more importantly, $W \cap (V^m)^c = \emptyset$ so $W\subseteq V^m.$  Since $V^m = \spn{V\cup \{ w_1, \cdots, w_m\} }$ we also have that $V^m \subseteq \spn{V,W}.$  Combining everything we get $$\overline{V\cup W} \subseteq V^m \subseteq \spn{V\cup W} \subseteq \overline{V\cup W}$$ and the lemma holds. 
\end{proof} 

\begin{definition}\label{def.maximal}

A subtorus $V$ is {\bf maximal} in $\spn{S}$ if no other subtorus in $\spn{S}$ contains $V$.  
\end{definition}

The next two lemmas give conditions for when and how a subtorus is internally spanned.

\begin{lemma} \label{lemma.maxinspn} For an initial configuration of open nodes $S$, let $V$ be a maximal subtorus in $\spn{S}.$   Then $V$ is internally spanned with $V = \spn{S\cap V}.$  
\end{lemma}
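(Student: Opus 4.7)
The plan is to prove the two inclusions $\spn{S\cap V}\sbq V$ and $V\sbq \spn{S\cap V}$ separately. For the first inclusion, I would show that when the initial configuration lies inside a subtorus $V$, all subsequent growth stays in $V$. The key observation is that any node $w\notin V$ disagrees with $V$ in at least one of the fixed coordinates $l\in I(V)$; the only way for $w$ to gain a neighbor lying in $V$ is to correct that discrepancy with a single coordinate change, so $w$ has at most one neighbor in $V$. Hence $w$ can never accumulate two open neighbors drawn from $V$, and a straightforward induction on $t$ shows that the process started from $S\cap V$ never leaves $V$, yielding $\spn{S\cap V}\sbq V$.

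For the reverse inclusion $V\sbq \spn{S\cap V}$ the maximality of $V$ is essential. My plan is to view the computation of $\spn{S}$ as a sequence of \emph{merges} of subtori, justified by Lemma \ref{lemma.spnclosed}: beginning with the collection $\{\{s\}:s\in S\}$ of zero-dimensional subtori, at each step pick any two subtori $U,U'$ in the current collection with $\dis(U,U')\le 2$ and replace the pair by $\overline{U\cup U'}$. Lemma \ref{lemma.spnclosed} guarantees that each such merge outcome already lies in $\spn{S}$ and equals the smallest subtorus containing $U\cup U'$. Since dimensions only increase, the procedure terminates in a collection of subtori whose union is $\spn{S}$, and by maximality $V$ must appear as one of the terminal subtori, produced by some merge tree whose leaves form a set $T\sbq S$.

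Now I would do reverse induction along the merge tree that produces $V$: its root is $V$ itself, and at every internal node the relation $\overline{U\cup U'}\sbq V$, together with the fact that $V$ is a subtorus containing $U\cup U'$, forces both inputs to satisfy $U\sbq V$ and $U'\sbq V$. Tracing down to the leaves yields $T\sbq V$, hence $T\sbq S\cap V$. Because every intermediate subtorus in this tree is already contained in $V$, the same merge sequence can be carried out starting from $T$ alone, and it still produces $V$. Thus $\spn{T}\supseteq V$, and by monotonicity of the bootstrap process $\spn{S\cap V}\supseteq \spn{T}\supseteq V$, which combined with the first inclusion completes the proof.

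The main obstacle I expect is tying the merge-tree picture rigorously to the bootstrap dynamics, in particular justifying that a maximal $V$ genuinely arises from a single merge tree and that the merges in that tree can be executed without help from any points of $S\setminus V$. A short confluence argument based on Lemma \ref{lemma.spnclosed} (which pins down the outcome of each individual merge) should handle both points, but requires some care. An alternative fallback would be to choose any minimal $T\sbq S$ with $V\sbq \spn{T}$ and argue that a point of $T\setminus V$ would have to belong to a different maximal subtorus of $\spn{T}$, which is at distance greater than $2$ from $V$ and therefore irrelevant to opening $V$; however, that route still uses the maximal-subtorus structure of $\spn{T}$, so the merge-tree approach seems cleaner overall.
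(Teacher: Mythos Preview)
Your proposal is correct but takes a genuinely different route from the paper. The paper's proof is a short contradiction argument: set $S_1=S\cap V$, $S_2=S\setminus S_1$, and suppose $\spn{S_1}\neq V$. Since $V\subseteq\spn{S}$, the growth that fills $V$ must at some point use a node $u\in\spn{S}\setminus V$ with $\dis(V,u)\le 2$; but then Lemma~\ref{lemma.spnclosed} gives $\overline{V\cup\{u\}}\subseteq\spn{S}$, a strictly larger open subtorus, contradicting maximality of $V$. That is the entire proof---no merge trees, no confluence argument.

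Your approach is more structural: you realize $\spn{S}$ as a terminal collection of pairwise-distant subtori obtained by iterated merges, identify the maximal $V$ as one of those terminal pieces, and trace its merge tree back to leaves $T\subseteq S\cap V$. This works, and the pieces you flag as obstacles (that the terminal collection has union exactly $\spn{S}$, and that every maximal subtorus appears as a terminal piece) do go through with the argument you sketch. The payoff is that your construction essentially proves Lemma~\ref{lemma.inspnbreakdown} at the same time---the last merge in the tree for $V$ is exactly the decomposition $V=\spn{V_1\cup V_2}$ that lemma asserts. The cost is that the write-up is substantially longer than the paper's two-line contradiction, and you have to be careful that the merge procedure is well-defined and terminates. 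If you only need Lemma~\ref{lemma.maxinspn}, the paper's argument is cleaner; if you want both lemmas at once, your merge-tree picture packages them nicely.
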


\begin{proof}
Let $S_1 = S\cap V$ and $S_2  = S \backslash S_1$.  If $\spn{S_1} = V$ then we are done.  Suppose that $\spn{S_1} \neq V$.  Since $V$ eventually becomes open, there must be some node $u \in \spn{S_2}$ such that $\dis(\spn{S_1},u) \leq 2,$ otherwise evolution would stop and $V$ could not be contained in $\spn{S}.$    In particular, there is a node $u\in\spn{S_2}$ such that $u\notin V$ yet $\dis( V, u ) \leq 2$.   By Lemma \ref{lemma.spnclosed} the smallest subtorus that contains both $u$ and $V$ becomes open eventually.   However $V$ is maximal so no such $u$ can exists and $\spn{S_1} = V$.     
\end{proof} 

\begin{lemma} \label{lemma.inspnbreakdown}

Let $S$ be a set of open nodes in $[n]^d$ with $V \subset \spn{S}$ a maximal open subtorus.  There exist disjoint non-empty subsets $S_1, S_2 \subset S$ and subtori $V_1, V_2 \subset V$ with $\dim(V_1)\leq \dim(V_2) < \dim(V)$ such that $\spn{S_1} = V_1$, $\spn{S_2}  = V_2,$ and $\spn{S_1 \cup S_2} = V.$

\end{lemma}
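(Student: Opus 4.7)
By Lemma~\ref{lemma.maxinspn} we may replace $S$ with $S \cap V$ and thus assume $S \subseteq V$ and $\spn{S} = V$; throughout we tacitly assume $\dim V \geq 1$, which the conclusion forces. The plan is to serialize the bootstrap dynamics as a sequence of pairwise mergers of open subtori (as permitted by Lemmas~\ref{lemma.spnptclosed} and \ref{lemma.spnclosed}) and to read off $V_1, V_2, S_1, S_2$ from the root of the resulting merger tree.

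Define a sequence of antichains of open subtori by setting $\mathcal{A}_0 = \{\{s\} : s \in S\}$ and, given $\mathcal{A}_k$, choosing any two distinct members $W, W' \in \mathcal{A}_k$ with $\dis(W, W') \leq 2$, removing them, inserting $\overline{W \cup W'}$, and discarding any other element of $\mathcal{A}_k$ that is contained in the new subtorus; if no such pair exists the procedure halts. Because each step strictly decreases $|\mathcal{A}_k|$, the procedure terminates at some $\mathcal{A}_T$ with pairwise distances exceeding $2$. Lemmas~\ref{lemma.spnptclosed} and \ref{lemma.spnclosed} yield $\bigcup \mathcal{A}_T \subseteq \spn{S}$; conversely, any bootstrap-opened node would share two open neighbors whose containing members of $\mathcal{A}_T$ must then lie within distance $2$, contradicting the terminal property. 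Thus $\bigcup \mathcal{A}_T = V$, and since $V$ is a single subtorus, $\mathcal{A}_T = \{V\}$.

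The merger history is naturally encoded as a binary tree with leaves the singletons of $\mathcal{A}_0$ and root labeled $V$: each internal node is the subtorus produced at the corresponding merger, with children the two subtori combined there. Let $V_1, V_2$ be the labels of the children of the root, relabeled so that $\dim V_1 \leq \dim V_2$, and let $S_i \subseteq S$ be the set of leaves of the subtree rooted at $V_i$. Then $S_1$ and $S_2$ are disjoint and nonempty; $\spn{S_i} = V_i$ because restricting the merger procedure to $S_i$ reproduces the subtree by confluence; $\spn{S_1 \cup S_2} = \overline{V_1 \cup V_2} = V$; and the antichain invariant forces $\dim V_i < \dim V$, since a non-trivial merger of two incomparable subtori strictly enlarges each input.

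The main obstacle is establishing the confluence property relied upon above: namely, that the iterative merger procedure terminates at a collection whose union is $\spn{S}$, independently of the nondeterministic choices made during its execution. This rests on the monotonicity of bootstrap percolation together with the distance-monotonicity estimate $\dis(\overline{W \cup W'}, X) \leq \min(\dis(W, X), \dis(W', X))$, which ensures that performing any available merger never destroys another merger opportunity.
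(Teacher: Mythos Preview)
Your approach is essentially the paper's: iteratively merge pairs of open subtori at Hamming distance $\leq 2$ and read off $V_1,V_2,S_1,S_2$ from the final merger that produces $V$; if anything, your version is more careful than the paper in justifying that the process actually terminates at $\{V\}$. The confluence property you flag as the main obstacle is not really needed: your direct argument in the second paragraph (a node outside $\bigcup\mathcal{A}_T$ with two open neighbors would place two members of $\mathcal{A}_T$ at distance $\leq 2$) already yields $\bigcup\mathcal{A}_T = V$ for \emph{any} run of the procedure, and $\spn{S_i}=V_i$ holds simply because the subtree's mergers, read in order, constitute a valid span computation starting from $S_i$ alone.
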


\begin{proof}

$V$ is maximal so we may assume $\spn{S} = V$.   Consider the sequence of nested collections of subtori contained in $\spn{S}$, $$\{ W_i^0\} \subset \{ W_i^1 \} \subset \cdots \subset \{W_i^k \} \subset V$$ where $S = \{W_i^0\}$ and $\{W_i^{k+1} \}$ is formed by finding two subtori $W_{i_1}^k$ and $W_{i_2}^k$ within Hamming distance 2 of each other and setting $W_{i_1}^{k+1} = \spn{W_{i_1}^k \cup W_{i_2}^k}$ and reindexing the others appropriately.  Since $S$ is finite, eventually we will have two subtori $W^{k}_{i_1}, W^k_{i_2} \neq V$ such that $\spn{ W^k_{i_1} \cup W^k_{i_2} } = V$.  Each $W_{i_l}^k$ had a set $S_l$ such that $\spn{S_l}  = W_{i_l}^k$ for $l = 1,2$.  
\end{proof}

\section{Critical Probability}

To find the asymptotics of $p_c(2j,d)$, we will first prove upper and lower bounds for the exponent of $p_\inspn(2j,d).$  Since $\inspn_{2j} \subset \sublat_{2j}$ any upper bound for $p_\inspn(2j,d)$ will hold for $p_c(2j,d).$  With a little more work, we then prove the lower bound for the exponent of $p_\inspn(2j,d)$ will also be a lower bound for the exponent of $p_c(2j,d).$   

For odd dimension subtori we will show that $\prob_p(\inspn_{2j-1}) \leq (1+o(1))\prob_p(\inspn_{2j})$ hence asymptotically $p_c(2j-1,d) \sim p_c(2j,d).$  This is apparent in the case of a line and a plane.  For a line to be internally spanned, two open nodes need to be co-linear, whereas for a plane to be internally spanned, two open nodes only need to be co-planar.

 \subsection{ Upper Bounds for $p_c(i,d)$ and $p_\inspn(i,d)$ }
 
 For fixed $d$ and $p$, the probability of $\inspn_V$ is identical for $V\in \torus_{i}.$  We then denote for any particular $V\in \torus_i$
\beqlbl \label{midef}
M_{i}:=\prob_p(\inspn_{V} ).
\eeqlbl
 
 \begin{lemma} \label{lemma.ealpha} 

Fix $\epsilon >0$ and suppose $p < n^{-2J_d-\epsilon}$.  For $1\leq i\leq J_d$, there exists a constant $c_{d,\epsilon} > 0$ such that for every $V \in \torus_{2i}$ and $n$ large
\beq
\prob_p(\inspn_V) = M_{2i} \geq (2i)!2^{-i-1}n^{i(i+3)}p^{i+1}(1-n^{-c_{d,\epsilon}}).
\eeq
\end{lemma}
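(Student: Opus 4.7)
The plan is to lower bound $\prob_p(\inspn_V)$ by enumerating the minimal (size $i+1$) internally-spanning subsets of $V$ and converting the first-moment count into a probability lower bound via Bonferroni.

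First, I identify the relevant configurations. Iterating Lemmas \ref{lemma.spnptclosed} and \ref{lemma.inspnbreakdown}, any size-$(i+1)$ subset $S\subset V$ with $\spn{S}=V$ admits a \emph{growth sequence}: an ordering $(s_0,s_1,\ldots,s_i)$ such that $\overline{\{s_0,\ldots,s_k\}}$ has dimension $2k$ for every $k$. Equivalently, $s_k$ disagrees in exactly two of the still-fixed coordinates of $\overline{\{s_0,\ldots,s_{k-1}\}}$, and conversely any such sequence internally spans $V$ by Lemma \ref{lemma.spnptclosed}. Counting the choices step by step --- $n^{2i}$ choices for $s_0$ and $\binom{2(i-k+1)}{2}(n-1)^2 n^{2(k-1)}$ for $s_k$, using $\prod_{k=1}^i\binom{2(i-k+1)}{2}=(2i)!/2^i$ --- gives
\[
N_{2i} := \#\{\text{growth sequences in }V\} = \frac{(2i)!}{2^i}\, n^{i(i+3)}(1+O(n^{-1})).
\]
Each minimal spanning set produces at least two growth sequences (swap $s_0\leftrightarrow s_1$), and a short case analysis on how many of the $\binom{i+1}{2}$ pairs in $S$ have Hamming distance $2$ shows that sets producing strictly more than two form only an $O(n^{-1})$-fraction of the total. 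Hence $|\mathcal{S}_{2i}(V)| = (2i)!\, 2^{-i-1}\, n^{i(i+3)}(1+O(n^{-1}))$, where $\mathcal{S}_{2i}(V)$ denotes the family of minimal spanning subsets of $V$.

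Next I apply Bonferroni. Let $Y := \sum_{S\in\mathcal{S}_{2i}(V)}\indc_{S\subseteq\omega_0}$, so $\expect_p[Y]=|\mathcal{S}_{2i}(V)|\,p^{i+1}$ and $\{Y\geq 1\}\subseteq\inspn_V$. Then
\[
\prob_p(\inspn_V) \geq \expect_p[Y] - \frac{1}{2}\sum_{S\neq S'} p^{|S\cup S'|},
\]
and I classify the pair sum by $k := |S\cap S'|\in\{0,1,\ldots,i\}$. For $k=0$ the contribution is bounded by $\expect_p[Y]^2$; under $p<n^{-2J-\epsilon}$ with $i\leq J$, the exponent $i(i+3)-(i+1)(2J+\epsilon)$ is negative, so $\expect_p[Y]$ itself is a polynomially small negative power of $n$ and $\expect_p[Y]^2=o(n^{-c})\expect_p[Y]$ for some $c>0$. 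For each $k\geq 1$, the number of pairs $(S,S')$ with prescribed intersection of size $k$ is bounded by $|\mathcal{S}_{2i}(V)|$ times an extension count that is polynomial in $n$, while the joint probability is $p^{2(i+1)-k}$. A direct bookkeeping shows $(\text{extension count})\cdot p^{i+1-k}=o(n^{-c})$ uniformly in $k$ under the hypothesis, so each overlap class contributes $o(n^{-c})\expect_p[Y]$. Summing and taking $c_d>0$ smaller than all the resulting exponents yields $\prob_p(\inspn_V)\geq\expect_p[Y](1-n^{-c_d})$, which is the desired bound.

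The main obstacle is the uniform pair-sum estimate for large overlaps $k=i-1,i$, where the extension counts are largest: when $k=i$ the single differing node between $S$ and $S'$ ranges over a set of size $\sim n^{2i}$, and the analogous counts for $k=i-1$ grow like $n^{4i-2}$. The hypothesis $p<n^{-2J-\epsilon}$ is calibrated exactly so that $n^{2i}p$, $n^{4i-2}p^2$, and the analogous quantities for smaller $k$ are all negative powers of $n$ for every $i\leq J$, giving the uniform error $n^{-c_d}$ with $c_d$ determined by the smallest of these gaps.
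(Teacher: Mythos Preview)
Your argument is correct, and your combinatorial count of minimal spanning configurations agrees with the paper's, but the probabilistic step is genuinely different. The paper avoids Bonferroni altogether: instead of the overlapping events $\{S\subseteq\omega_0\}$, it works with the \emph{disjoint} events $\{\omega_0\cap V=S\}$ for $S\in\mathcal{L}_V$, giving immediately
\[
M_{2i}\;\ge\;\sum_{S\in\mathcal{L}^*_V}\prob_p(\omega_0\cap V=S)\;=\;|\mathcal{L}^*_V|\,p^{i+1}(1-p)^{n^{2i}-i-1}.
\]
The hypothesis $p<n^{-2J-\epsilon}$ is then used only once, to show $(1-p)^{n^{2i}}\ge 1-n^{-\beta_d}$ via $pn^{2i}\le n^{-\epsilon}$. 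Your approach deploys the same hypothesis at the analogous place (the $k=i$ overlap term in the pair sum is controlled by exactly the quantity $pn^{2i}$), but you additionally need the bookkeeping for all other overlap classes and the $k=0$ term, for which you further use that $\expect_p[Y]$ itself is $o(1)$. So the paper's disjoint-event trick is cleaner and uses strictly less; your Bonferroni route is more laborious but perfectly standard, and has the minor advantage that the counting of $\mathcal{S}_{2i}(V)$ via growth sequences is somewhat more explicit than the paper's inductive count of ``perfect'' collections. One small point: for the lower bound you only need $|\mathcal{S}_{2i}(V)|\ge(2i)!2^{-i-1}n^{i(i+3)}(1-O(n^{-1}))$, so you do not actually need the claim that \emph{every} minimal spanning set admits a growth ordering---it suffices that every growth sequence yields one, and that generic (``perfect'') sequences map two-to-one onto unordered sets.
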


\begin{proof}[Proof of Lemma \ref{lemma.ealpha}]

Let $V$ be a subtorus with dimension $2i$.  Suppose we have a collection of distinct nodes $S = \{v_1, \cdots, v_{i+1} \} \subset V$ such that $\spn{\{v_1, \cdots , v_{i+1}\} } = V.$  The probability that only these nodes are open is exactly $p^{i+1}(1 - p)^{n^{2i}-i-1}.$  Let $\mathcal{L}_V$ be the set of all such collections.  Since $p < n^{-2J_d-\epsilon}$ and $i \leq J_d$ there exists constant $\beta_{d,\epsilon} > 0$ such that $(1-p)^{n^{2i}-i-1} \geq(1-n^{-\beta_{d,\epsilon}})$ for sufficiently large $n$.  Then 

\beqlbl \label{m2ilower}
M_{2i} \geq \sum_{\mathcal{L}_V} p^{i+1} (1-p)^{n^{2i} - i - 1}  \geq |\mathcal{L}_V|p^{i+1}(1-n^{-\beta_{d,\epsilon}}).
\eeqlbl

We call a ordered collection, $S = \{v_1, \cdots, v_{i+1}\}$ {\em perfect} in $V$ if the following are satisfied:

\begin{itemize}
\item $\spn{S} =V$,
\item for $1\leq i_1 < i_2 \leq i+1$, $\dis(v_{i_1}, v_{i_2}) = 2(i_2-1),$
\item and $v_1 < v_2$ in lexicographical ordering.
\end{itemize}   For $i' \leq i$, the subcollection $S_{i'} = \{ v_1, \cdots, v_{i' + 1} \}$ is also perfect in $\spn{S_{i'}}= V'$ and $\dim(V') = 2i'.$  Note that a non-trivial rearrangement of a perfect ordered collection is not a perfect ordered collection.  We call an unordered collection perfect if there exists an ordering of that collection that is perfect.

Let $\perfect{V}\subset \mathcal{L}_V$ denote the set of perfect collections for $V$.  We will show for $V$ in $\torus_{2i}$ there is a sequence of constants $\{b_i\}$ such that for large enough $n,$

\beqlbl \label{perfectsize}
|\perfect{V}| \geq (2i)!2^{-i-1}n^{i(i+3)}(1-b_in^{-1}).
\eeqlbl

Let $b_1 = 2$ and define recursively $b_i$ for $i\geq 2$ by the recursion $b_i = 4ib_{i-1}$.  For a plane, $P$, a pair of points is perfect if they are not collinear.  Hence
$$|\perfect{P}| = {n^2 \choose 2} - 2n {n \choose 2} \geq \frac{n^4}{2}(1-2n^{-1})$$
and Inequality \ref{perfectsize} is true.  We continue inductively and assume for $i\geq 2$ and a subtori $W\in \torus_{2i-2},$ $$|\perfect{W}| \geq (2i-2)!2^{-i}n^{(i-1)(i+2)}(1-b_{i-1}n^{-1}).$$  Suppose $W\subset V$ and a fix $S'\in \perfect{W}$, then $\{v\} \cup S'$ is in $\perfect{V}$ if $v\in V$ differs in the first $2i$ coordinates with each $w\in S'$ and agrees with the rest.  Therefore there are at least $(n-i)^{2i}$ possible choices of $v \in V$ where $\{v\} \cup S'$ is perfect.  For $V \in \torus_{2i},$ there are exactly ${2i \choose 2}n^2$ $W\subset V$ with $W \in \torus_{2i-2}$.  Then

\begin{align*}
|\perfect{V}| =& \sum_{W\subset V,  W\in \torus_{2i-2}} \sum_{S' \in \perfect{W}} \sum_{v \in V} \indc_{S' \cup \{v\}  \text{ is perfect in $V$ }}\\ \nonumber
\geq& \sum_{W\subset V,  W\in \torus_{2i-2}} \sum_{S' \in \perfect{W}} (n- i)^{2i}\\
\geq& \sum_{W\subset V,  W\in \torus_{2i-2}} ( 2i-2)!2^{-i}n^{(i-1)(i+2)}(1-b_{i-1}n^{-1})n^{2i}(1- in^{-1})^{2i}\\
\geq & {2i \choose 2}n^2(2i-2)!2^{-i}n^{(i-1)(i+2)}(1-4ib_{i-1}n^{-1})\\
\geq & (2i)!2^{-i-1}n^{i(i+3)}(1-b_in^{-1}).
\end{align*}

 Combining Inequalities \ref{m2ilower} and \ref{perfectsize} gives

\begin{align*}M_{2i}&\geq |\mathcal{L}_V|p^{i+1} (1-n^{-\beta_{d,\epsilon}})\\
&\geq |\perfect{V}|p^{i+1} (1-n^{-\beta_{d,\epsilon}})\\
&\geq (2i)!2^{-i-1}n^{i(i+3)}(1-b_in^{-1})p^{i+1}(1-n^{-\beta_{d,\epsilon}}).
\end{align*}
Therefore, for $n$ sufficiently large, $(1-n^{-\beta_d})(1-b_in^{-1}) \geq (1-n^{c_{d,\epsilon}})$, so 
\begin{equation*}
\geq (2i)!2^{-i-1}n^{i(i+3)}p^{i+1}(1-n^{-c_{d,\epsilon}}),
\end{equation*}  
completing the proof.
\end{proof}

 \begin{proposition}\label{prop.upperbound}
 
 Fix $d>2$ and $j\leq J_d$.  Let $f(n)\leq n^{d/{j+1} + j}$ satisfy $\lim_{n\to\infty} f(n)= \infty$ and $p = f(n)n^{-d/(j+1) - j},$ then $$ \prob_p(\inspn_{2j})\to 1.$$  
  
 \end{proposition}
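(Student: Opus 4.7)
The plan is to leverage Lemma \ref{lemma.ealpha}, which provides a lower bound on $M_{2j}$, combined with a simple independence argument over a carefully chosen pairwise-disjoint sub-family of $\torus_{2j}$.

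First I would verify that the prescribed $p = f(n) n^{-d/(j+1) - j}$ satisfies the hypothesis $p \leq n^{-2J - \epsilon}$ of Lemma \ref{lemma.ealpha} for some $\epsilon > 0$. This reduces to the strict inequality $d/(j+1) + j > 2J$, equivalently $d > (j+1)(2J - j)$; viewed as a function of $j$ on $[0, J]$, the right-hand side is maximized at $j = J$ with value $J(J+1)$, which is strictly less than $d$ by the definition $J = \max\{j : j(j+1) < d\}$. The factor $f(n) < \log n$ is absorbed since $\log n = o(n^{\delta})$ for every $\delta > 0$, so the hypothesis holds for $n$ large. Applying Lemma \ref{lemma.ealpha} and simplifying the exponent $j(j+3) - (j+1)\bigl(d/(j+1) + j\bigr) = 2j - d$ yields
\beq
M_{2j} \geq C_d \, f(n)^{j+1}\, n^{2j - d}(1 - o(1))
\eeq
for some constant $C_d > 0$ depending only on $d$.

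The key step is a disjointness/independence observation. Fix any index set $I \subset [d]$ with $|I| = d - 2j$. For each $\alpha \in [n]^I$, let $V_\alpha \in \torus_{2j}$ denote the subtorus obtained by fixing $v_l = \alpha_l$ for all $l \in I$. These $n^{d - 2j}$ subtori are pairwise disjoint, and the event $\inspn_{V_\alpha}$ depends only on $\omega_0|_{V_\alpha}$; hence $\{\inspn_{V_\alpha}\}_{\alpha \in [n]^I}$ are mutually independent. Consequently,
\beq
\prob_p(\inspn_{2j}) \geq 1 - (1 - M_{2j})^{n^{d-2j}} \geq 1 - \exp\bigl(-n^{d-2j} M_{2j}\bigr),
\eeq
and the lower bound from the first paragraph gives $n^{d-2j} M_{2j} \geq C_d f(n)^{j+1}(1-o(1)) \to \infty$, so $\prob_p(\inspn_{2j}) \to 1$ as claimed.

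Most of the work is already packaged in Lemma \ref{lemma.ealpha}; the only subtle point of this proof is checking that the prescribed $p$ lies in the regime where that lemma applies, which is the arithmetic verification above. A second-moment argument over all of $\torus_{2j}$ would presumably also succeed, but the pairwise-disjoint subfamily $\{V_\alpha\}$ delivers independence with no correlation bookkeeping and should give the cleaner write-up.
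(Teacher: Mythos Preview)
Your proposal is correct and follows essentially the same approach as the paper: both arguments pick the pairwise-disjoint family of $2j$-dimensional subtori obtained by fixing a single set of $d-2j$ coordinates, use independence to get $\prob_p(\inspn_{2j}) \geq 1 - (1-M_{2j})^{n^{d-2j}}$, and then invoke Lemma~\ref{lemma.ealpha} to show $n^{d-2j}M_{2j}\to\infty$. Your verification that $d/(j+1)+j>2J$ (so that the hypothesis of Lemma~\ref{lemma.ealpha} is met) is in fact more explicit than the paper's, which simply asserts this inequality.
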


\begin{proof}
   
First we define a sufficient event $E_{2j}\subset \inspn_{2j}$.  If we can show $\prob_p(E_{2j})\to 1$ then we can conclude $\prob_p(\inspn_{2j}) \to 1$ as well.     
 
For a fixed set of constants $\alpha = \{\alpha_{2j+1}, \cdots, \alpha_d \}$, let $V(\alpha)$ denote the subtorus given by $$V(\alpha) = \{ v\in [n]^d : v_i = \alpha_i \text{ for } 2j+1\leq i\leq d\}.$$  
 
There are $n^{d-2j}$ such subtori.  For $\alpha' = \{\alpha'_{2j+1},\cdots, \alpha'_d\}$, if $\alpha\neq \alpha',$ $V(\alpha)\cap V({\alpha'}) = \emptyset.$   Each event $\inspn_{V(\alpha)}$ will depend only on the nodes in $V(\alpha)$ so the events are independent.  The events will all have the same probability $\prob_p(\inspn_{V(\alpha)}) = \prob_p(\inspn_{V{(\alpha')}}).$  We now define the sufficient event, $$E_{2j} = \bigcup_{\alpha}\inspn_{V(\alpha)}.$$   We will show that $\prob_p(E_{2j}) \to 1$ for sufficiently large $p$ that satisfy the conditions of the proposition.  Since $E_{2j}\subset\inspn_{2j}$ this implies $\prob_p(\inspn_{2j}),\to 1$ as well.  

With this definition we have 

\beqlbl
\prob( E_{2j} ) = 1 - (1-M_{2j})^{n^{d-2j}} \geq 1- e^{-n^{d-2j}M_{2j}}.
\eeqlbl

We prove Proposition \ref{prop.upperbound} by proving that $n^{d-2j} M_{2j}\to \infty.$ 
  
First assume $f(n) < \log n$ and $j \leq J_d,$ $p = f(n)n^{-d/(j+1) -j}$ satisfies $p <  n^{-2J_d -\epsilon}$ for some $\epsilon>0$ and we may apply Lemma \ref{lemma.ealpha} to show 

\begin{align*}
n^{d-2j}M_{2j}  &\geq  n^{d-2j}(2j)!2^{-j-1}n^{j(j+3)}p^{j+1}(1-n^{-c_{d,\epsilon}}) \\
&\geq f(n)^{j+1}(1-n^{-c_{d,\epsilon}}) \to \infty.
\end{align*} 
\end{proof}

 If $\inspn_{2j}$ occurs then $\sublat_{2j}$ also occurs.  Proposition \ref{prop.upperbound} implies that for large enough $n$, $$p_c(2j-1,d)\leq p_c(2j,d)\leq p_\inspn(2j,d) < f(n)n^{-d/(j+1) - j}.$$ 

  The caveat that $f(n)< \log n$ is necessary only for the proof of the proposition.  Both $\prob_p(\inspn_{2j})$ and $\prob_p(\sublat_{2j})$ are increasing in $p$, so the proposition will still be true for faster growing $f(n)$ as long as $p\leq1$.

\subsection{ Lower Bound for $p_\inspn(i,d)$}

In this section we prove the lower bound for the critical exponent of $p_\inspn(2j,d).$  First let's start with the simplest possibilities for $V$: a single node, a line, and a plane.

\begin{itemize}
\item For a single node $u$, $$\prob_p( \inspn_{\{u\}} ) = p.$$
\item For a single line $L$,  $$\prob_p(\inspn_L) = \prob\big(\bin(n,p) \geq 2 \big) \leq {n \choose 2}p^2 = O(n^2p^2).$$ 
\item For a single plane $P$, $$\prob_p(\inspn_P) \leq \prob\big( \bin( n^2,p ) \geq 2 \big)\leq 2^{-1}n^4p^2.$$
\end{itemize}

Note that a plane is more likely to be internally spanned than a line because a line requires at least two collinear points.  The following lemma extends these computations.

\begin{lemma} \label{lemma.iupperbound} 

Fix $d$ and $j\leq J_d$ and let $p = f(n)n^{-d/(j+1) - j}$ for some $f(n) \to 0$.   For $1\leq i \leq j,$
\beqlbl \label{oddbound}
M_{2i-1} \leq O(n^{i(i+3)-2}p^{i+1}).
\eeqlbl
\beqlbl
M_{2i} = (1+ O(n^{-1}))(2i)!2^{-i-1}n^{i(i+3)}p^{i+1}. \label{evenbound}
\eeqlbl
Lastly $M_0 = p$.  
\end{lemma}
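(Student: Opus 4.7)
The plan is to prove the two bounds by a joint induction on $i$. For (\ref{evenbound}) only the upper bound is new, since the matching lower bound is supplied by Lemma \ref{lemma.ealpha}. The base cases $i = 1$ are direct: $M_2 \le \binom{n^2}{2}p^2 = \tfrac{1}{2} n^4 p^2 (1 + O(n^{-1}))$ is immediate, and $M_3$ is bounded above by enumerating three-point spanning configurations of a three-dimensional subtorus---two points spanning a plane plus a third at Hamming distance $1$ from that plane---which gives $O(n^8 p^3)$.

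For the inductive step I will apply Lemma \ref{lemma.inspnbreakdown}: if $\inspn_V$ occurs for $V$ with $\dim V = d \in \{2i, 2i+1\}$, there exist proper subtori $V_1, V_2 \subsetneq V$ with $d_l := \dim V_l$ and $d_1 \le d_2$, and disjoint witnesses $S_1, S_2$ such that $\spn{S_l} = V_l$ and $\spn{V_1 \cup V_2} = V$. Writing $F_l \subset [d]$ for the coordinates extra-fixed by $V_l$ (so $|F_l| = d - d_l$), the requirement $\spn{V_1 \cup V_2} = V$ forces every index in $F_1 \cap F_2$ to carry differing values in $V_1$ and $V_2$, so $\dis(V_1, V_2) = |F_1 \cap F_2|$, and Lemma \ref{lemma.spnclosed} then demands $|F_1 \cap F_2| \le 2$. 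When $|F_1 \cap F_2| \ge 1$ the subtori are disjoint, making $\inspn_{V_1}$ and $\inspn_{V_2}$ independent, and the union bound over splits yields
\beq
M_d \;\le\; \sum_{(V_1, V_2)} M_{d_1}\, M_{d_2} \;+\; R_d,
\eeq
where $R_d$ collects the $|F_1 \cap F_2| = 0$ contribution.

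A direct enumeration shows there are $O(n^{(d - d_1) + (d - d_2)})$ pairs $(V_1, V_2)$ at given $(d_1, d_2)$, so combined with the inductive estimates each split contributes at order $n^{(2d - d_1 - d_2) + e(d_1) + e(d_2)} p^{m(d_1) + m(d_2)}$, with $m(d) = \lceil d/2 \rceil + 1$ and $e(d)$ the inductive $n$-exponent. Optimizing first in $p$-exponent and then in $n$-exponent singles out the split $(0, d-2)$ when $d$ is even---a point at Hamming distance $2$ from an internally spanned $(d-2)$-subtorus---and the split $(0, d-1)$ when $d$ is odd---a point at Hamming distance $1$ from an internally spanned $(d-1)$-subtorus. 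Plugging the inductive formula for $M_{d-2}$ into the even case and computing the pair-count constant reproduces exactly $(2i)!2^{-i-1}n^{i(i+3)}p^{i+1}$, matching (\ref{evenbound}); the odd case yields $O(n^{(i+1)(i+4)-2}p^{i+2})$ as claimed in (\ref{oddbound}).

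The main obstacle will be the bookkeeping of the case analysis. I need to check that every admissible split other than the dominant one either uses strictly more than the minimum number of points, in which case the extra factor of $p = o(n^{-1})$ absorbs it into the $(1+O(n^{-1}))$ correction, or uses the same number of points but a strictly smaller $n$-exponent, again contributing only $O(n^{-1})$ of the leading term. For the residual $R_d$, the constraint $|F_1 \cap F_2| = 0$ forces $d_1 + d_2 \ge d$, so $m(d_1) + m(d_2) \ge m(d) + 1$; the crude bound $\prob(\inspn_{V_1} \cap \inspn_{V_2}) \le M_{d_1}$ is then enough to keep $R_d$ subdominant without invoking any BK-type arguments.
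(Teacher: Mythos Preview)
Your overall architecture---induction on $i$, decomposition via Lemma~\ref{lemma.inspnbreakdown}, identifying $(0,2i-2)$ and $(0,2i)$ as the dominant splits---matches the paper's proof. The gap is in your treatment of the residual $R_d$, the $|F_1\cap F_2|=0$ (overlapping) case. You correctly observe that $d_1+d_2\ge d$ forces $m(d_1)+m(d_2)\ge m(d)+1$, but this fact about the \emph{sum} of minimal point counts does nothing for you once you invoke only the crude bound $\prob(\inspn_{V_1}\circ\inspn_{V_2})\le M_{d_1}$: that bound carries only $p^{m(d_1)}$, not $p^{m(d_1)+m(d_2)}$, so the extra factor of $p$ you need never appears.

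Concretely, take $\dim V=4$ (so $i=2$, which is allowed whenever $j\ge 2$) and the split $d_1=d_2=2$ with $F_1,F_2$ complementary $2$-subsets of the four free coordinates. There are $O(n^4)$ such pairs, your crude bound gives $M_2=O(n^4p^2)$, and the resulting contribution to your upper bound for $M_4$ is $O(n^8p^2)$. The target is $(1+O(n^{-1}))\cdot 3\,n^{10}p^3$, and the ratio $n^8p^2/(n^{10}p^3)=1/(n^2p)\to\infty$ since $p<n^{-2j}\le n^{-4}$ here. So $R_4$ swamps the main term and the induction fails at $i=2$. The same obstruction appears for $M_3$ via the overlapping split $(d_1,d_2)=(2,2)$ in a $3$-torus, so even your base case for (\ref{oddbound}) needs more than the direct enumeration you sketch.

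The paper handles exactly this by applying the van den Berg--Kesten inequality uniformly: $\prob(\inspn_{V_1}\circ\inspn_{V_2})\le M_{d_1}M_{d_2}$ regardless of whether $V_1,V_2$ intersect. In the example above this gives $O(n^4)\cdot O(n^8p^4)=O(n^{12}p^4)$, and now $n^{12}p^4/(n^{10}p^3)=n^2p\to 0$ as required. Your disjoint-case argument is fine and indeed makes BK unnecessary there, but you cannot avoid a BK-type decoupling in the overlapping case; the disjoint-witness structure of $\inspn_{V_1}\circ\inspn_{V_2}$ is precisely what BK is designed to exploit, and no bound that ignores $\inspn_{V_2}$ entirely can recover the missing $p^{m(d_2)}$.
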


\begin{proof}{(By induction on $i$)}

We assume the lemma holds for all $1\leq l \leq  2i-2$ and show by induction that the formulas hold for dimensions $2i$ and $2i-1$.  Note the lemma holds for a line and a plane.  For a point we have $M_0 = p,$ which does not fit the formula and hence is mentioned separately.  

First let's assume a subtorus $V$ is internally spanned.  By Lemma \ref{lemma.inspnbreakdown}, there exists proper subtori $V_1$, $V_2 \subset V$ both internally spanned by disjoint non-empty subsets $S_1$ and $S_2$ such that $V = \spn{V_1,V_2}.$  Let $D_V$ denote the set of possible pairs of such subtori of $V$ with $\dim(V_1)\leq \dim(V_2)$.  $\inspn_V$ can be expressed as a union over $D_V$ of events of the form $\inspn_{V_1}\circ \inspn_{V_2},$ where $\circ$ denotes the disjoint occurrence of the two events.  By the union bound and the van den Berg-Kesten inequality \cite{grimmettpercolation} we have

$$\prob_p(\inspn_V)  \leq \sum_{ D_V} \prob_p( \inspn_{V_1}\circ \inspn_{V_2}) \leq \sum_{ D_V} \prob_p( \inspn_{V_1}) \prob_p( \inspn_{V_2}).$$

For $0 \leq t_1 \leq t_2 < \dim(V)$ let $D_V(t_1,t_2)$ denote the subset of $D_V$ where $\dim(V_1) = t_1$ and $\dim(V_2) = t_2$.  Since $\spn{V_1\cup V_2}$ is a subtorus it has dimension at most $t_1+t_2+2.$  Therefore if $t_1+t_2+2<\dim(V)$, then $D_V(t_1,t_2)$ is empty.  Otherwise $|D_V(t_1,t_2)|=O(n^{2i-t_1}n^{2i-t_2}).$  Then we have

\beqlbl  \label{dvsplit}
\sum_{D_V} \prob_p( \inspn_{V_1} )\prob_p( \inspn_{V_2} ) = \sum_{0\leq t_1\leq t_2}\sum_{D_V(t_1,t_2)} M_{t_1}M_{t_2} = \sum_{0\leq t_1\leq t_2} |D_V(t_1,t_2)|M_{t_1}M_{t_2}.
\eeqlbl

If $V\in \torus_{2i}$ we will show the probability $\inspn_V$ occurs is on the same order as the  probability there exists a pair $(V_1,V_2)\in D_V(0,2i-2)$ such that $\inspn_{V_1}\circ\inspn_{V_2}$ occurs.  

There exists a constant, $C$, depending only on $d$ such that 
$$|D_V(t_1,t_2)|M_{t_1}M_{t_2} \leq Cn^{4i-t_1-t_2}M_{t_1}M_{t_2}.$$

Note that $M_{2i-1} = O(n^{-2}M_{2i})$.  If $t_1= 2l-1$, then by the induction hypothesis $n^{2i-(2l-1)}M_{2l-1}=O( n^{-1}n^{2i-2l}M_{2l})$ so we may assume that $t_1$ (and $t_2$) are both even.  Let $t_1 =2i_1,$ and $t_2= 2i_2$, with $i_1+i_2 +1 = i + k$, where $0\leq k\leq i_1\leq i_2 < i $.  By the induction hypothesis we have an upper bound for $M_{2i_1}$ and $M_{2i_2}$.  

Therefore

\begin{align*}
|D_V(t_1,t_2)|M_{t_1}M_{t_2} \leq &Cn^{4i-2i_1-2i_2}M_{2i_1}M_{2i_2}\\
= & C(1+O(n^{-1}))^2n^{ 4i-2i_1-2i_2}n^{i_1(i_1 + 3) + i_2(i_2+3)}p^{i_1+1+i_2+1}\\
\leq& Cn^{-5i +k -1 + i_1^2+i_2^2}p^{i+1}\\
\leq &Cn^{i(i+3)}p^{i+1} n^{k(k-1) - 2i_1i_2}.
\end{align*}

If $i_1 >0$, then $k(k-1)-2i_1i_2 \leq -2.$  Therefore if $i_1 > 0$
\beqlbl \label{dab}
|D_V(t_1,t_2)|M_{t_1}M_{t_2}  = O(n^{-1})n^{i(i+3)}p^{i+1}.
\eeqlbl
If $t_1 = 0$ then $t_2= 2i-2$.  There are at most ${2i \choose 2}(n^{2i}n^2)$ pairs in $D_V(0,2i-2)$.  Therefore
 
 \beqlbl \label{d02i2}
|D_V(0,2i-2)|M_0M_{2i-2} \leq {2i-2 \choose 2}(n^{2i}n^2)(2i-2)\!2^{-i}n^{(i-1)(i+2)}p^{i}p(1+O(n^{-1}))
 \eeqlbl
Combining Equations \eqref{dab} and \eqref{d02i2}
 \beqlbl
 \sum_{D_V} \prob_p(\inspn_{V_1}\circ \inspn_{V_2} ) \leq (1+O(n^{-1})){2i \choose 2}(n^{2i}n^2)n^{i(i+3)}p^{i+1}
 \eeqlbl gives an upper bound for $M_{2i}.$  This inequality combines with Lemma \ref{lemma.ealpha} to prove Equation \eqref{evenbound} of Lemma \ref{lemma.iupperbound}.
 
 A similar argument shows that for $\dim(V) = 2i-1$ the sum is dominated by the terms from $D_V(0,2i).$  If $\dim(V) = 2i-1,$ then there are at most $O(n^{2i-1}n)$ pairs in $D_V(0,2i)$.  The union bound gives

$$|D_V(0,2i)| M_{0}M_{2i-2} = O( n^{2i-2}n ) M_{0}M_{2i-2} = O\left(n^{(i(i+3)-2}p^{i+1}\right),$$ proving \eqref{oddbound} of Lemma \ref{lemma.iupperbound}.
\end{proof}

\begin{proposition} \label{prop.lowerbound}

Fix $d$ and $j\leq J_d$.  For any $f(n) \to 0$, if $p = f(n)n^{-d/(j+1) - j},$ then 

$$\prob_p(\inspn_{2j}) \to 0.$$   
 
\end{proposition}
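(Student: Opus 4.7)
The plan is to prove the proposition by a straightforward union bound, using the upper bound on $M_{2j}$ already established in Lemma \ref{lemma.iupperbound}. Since $\inspn_{2j} = \bigcup_{V \in \torus_{2j}} \inspn_V$, we get
\beq
\prob_p(\inspn_{2j}) \;\leq\; |\torus_{2j}|\, M_{2j}.
\eeq
The number of subtori of dimension $2j$ is $|\torus_{2j}| = \binom{d}{2j} n^{d-2j}$, so the size of the family contributes a factor of order $n^{d-2j}$.

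Next, I would plug in the bound $M_{2j} = (1+O(n^{-1}))(2j)!\,2^{-j-1} n^{j(j+3)} p^{j+1}$ from Equation \eqref{evenbound} of Lemma \ref{lemma.iupperbound}. This yields
\beq
\prob_p(\inspn_{2j}) \;\leq\; C\, n^{d-2j + j(j+3)}\, p^{j+1}
\eeq
for some constant $C = C(d,j)$.

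Now I would substitute $p = f(n)\,n^{-d/(j+1) - j}$. Then $p^{j+1} = f(n)^{j+1}\, n^{-d - j(j+1)}$, so the exponent of $n$ becomes
\beq
d - 2j + j(j+3) - d - j(j+1) \;=\; -2j + j^2 + 3j - j^2 - j \;=\; 0.
\eeq
The powers of $n$ cancel exactly, as they must at the critical exponent, leaving $\prob_p(\inspn_{2j}) \leq C\, f(n)^{j+1} \to 0$.

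There is no real obstacle here: the lemma on $M_{2j}$ does all the heavy lifting, and the only content of the proof is verifying that the counting factor $n^{d-2j}$ and the per-subtorus probability combine to give the exact critical scaling. The notable feature worth pointing out in the write-up is that $p = f(n) n^{-d/(j+1) - j}$ is precisely the scale at which $n^{d-2j} M_{2j}$ is of order one, confirming that this is indeed the right threshold exponent for $\inspn_{2j}$.
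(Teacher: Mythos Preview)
Your proposal is correct and follows essentially the same approach as the paper: a union bound over $\torus_{2j}$ combined with the upper bound on $M_{2j}$ from Lemma~\ref{lemma.iupperbound}, after which the powers of $n$ cancel and one is left with $O(f(n)^{j+1})\to 0$. The paper's proof is slightly terser (it just states $M_{2j}=O(f(n)^{j+1}n^{2j-d})$ without writing out the exponent arithmetic), but the argument is identical.
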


This proposition implies $p_\inspn(2j,d) > f(n)n^{-d/(j+1) - j}.$  Unlike Proposition \ref{prop.upperbound}, we need a little extra care to claim $p_c(2j,d) > f(n)n^{-d/(j+1)-j}$ (see Section \ref{pcbound}).  
 
\begin{proof}

The union bound gives: $$\prob_p(\inspn_{2j}) \leq \sum_{ V\in \torus_{2j} } \prob_p( \inspn_V)\leq {d\choose 2j}n^{d-2j}M_{2j}.$$ 

By Lemma \ref{lemma.iupperbound}, $M_{2j} = O(f(n)^{j+1} n^{2j-d} )$ when $p = f(n)n^{-d/(j+1)-j}.$  Then $\prob_p( \inspn_{2j} ) = O(f(n)^{j+1}) \to 0$ which implies $p_\inspn(2j,d) >  f(n)n^{-d/(j+1)-j}.$
\end{proof}

%

%
%
%

\subsection{ Bounds for $p_c(2j,d)$ } \label{pcbound}

In this section we will show $\prob_p(\sublat_{2j} \backslash \inspn_{2j} ) \to 0.$   We will show that if $\prob( \inspn_{2j} )= 0$ then $\prob_p( \sublat_{2j} ) = 0.$  By Proposition \ref{prop.lowerbound} we have for fixed $d$ and $j\leq J_d$ with $f(n) \to 0$, $$p_\inspn(2j,d) \geq f(n) n^{-d/(j+1)-j}$$ for large enough $n.$

If $\sublat_{2j}$ occurs then there exists some subtorus with dimension greater than or equal to $2j$ that is internally spanned.  The next lemma will show that for any dimension $b>2j$, $\prob_p(\inspn_b)\to 0$ if $\prob_p(\inspn_{2j})\to 0.$  This implies that $\prob_p(\sublat_{2j})\to 0$ as well.

\begin{lemma}\label{nob}
Fix $d$ and $j \leq J_d$, and let $p=an^{-d/(j+1)-j}.$  For $b> x$ and $V\in \torus_b$, let $\binspn^x_V$ denote the event that $V$ is internally spanned and no subtorus contained in $V$ with dimension exactly $x$ is internally spanned.  Let $\binspn^{x}_b = \cup_{V\in \torus_b} \binspn^x_V.$  Then,
\beq
\prob_p(\binspn^{2j}_b) \to 0.
\eeq
\end{lemma}

\begin{proof}

By Lemma \ref{lemma.inspnbreakdown}, if $\inspn_{V}$ occurs for some $V \in \torus_b$, there exist $V_1$ and $V_2 \subset V$ with $\dim(V_1)\leq \dim(V_2) < b$ such that $\inspn_{V_1}\circ\inspn_{V_2}$ occurs and $\spn{V_1\cup V_2} = V$.  If $\dim(V_2)>2j$ we may repeatedly apply Lemma \ref{lemma.inspnbreakdown} until we have a pair of subtori $(V_1',V_2')$ such that $\dim(V'_1) \leq \dim(V'_2) < 2j$, $\inspn_{V'_1}\circ \inspn_{V'_2}$ occurs and $V' = \spn{V'_1\cup V'_2}$ with $\dim(V')=b'>2j$.  If $\dim(V_1) =0$ then $\dim(V_2) = 2j-1$.  By Lemma \ref{lemma.iupperbound} and the union bound,
\beq
\prob_p(\inspn_{2j-1}) \leq \bigo{n^{d-2j+1}M_{2j-1} }\leq \bigo{n^{d-2j+1}n^{2j-2-d}} = o(1).
\eeq
Therefore we may assume $0< \dim(V'_1)\leq \dim(V'_2) < 2j-1.$

Let $T$ denote the set of $t_1,t_2$ such that $1\leq t_1\leq t_2 < 2j$ and $t_1+t_2 \geq b-2.$  We will assume for simplicity $t_1=2i_1, t_2 = 2i_2$ and $b = 2i = 2j + 2k$ for some $0<k<i_1\leq i_2 <j.$  The computations where $t_1,t_2$ or $b$ are odd follow similar arguments as those that follow.

Let $T_0$ denote the subset of $T$ such that $t_1+t_2 = b-2.$  The expression $n^{2b-t_1-t_2}M_{t_1}M_{t_2}$ decreases if $t_1$ or $t_2$ increases.  For each $(t_1,t_2)\in T_0$ such that $t_1+t_2 = b-2$ there are at most $4j$ pairs $(x_1,x_2)\in T$ where $x_1\geq t_1$ and $x_2\geq t_2$.  For any $V\in \torus_b$

\begin{align*}
\prob_p(\binspn^{2j}_V)&= O\left( \sum_{(x_1,x_2) \in T}n^{2b-x_1-x_2} M_{x_1}M_{x_2} \right)\\
&= O\left( \sum_{(t_1,t_2)\in T_0 } 4jn^{2b - t_1 - t_2 }M_{t_1}M_{t_2} \right)\\
& = O\left( \sum_{(t_1,t_2)\in T_0}n^{4i-2i_1-2i_2 +i_1^2+i_2^2 + 3i_1+3i_2  }p^{i_1+i_2 + 2} \right )\\
&=O\left( \sum_{(t_1,t_2)\in T_0}n^{-2i_12i_2 + j^2+ k^2 + 2jk+3j+3k + 1} n^{-d-j(j+1)} p^{k}\right)\\
& = O\left( \sum_{(t_1,t_2)\in T_0}n^{2j+2k-d +1-2i_1i_2 + k(k + 1  ) }\right)\\
& = O\left( \sum_{(t_1,t_2)\in T_0}n^{b - d - 1}\right)\\
& = O\left( n^{b-d-1}\right )
\end{align*}
since $1-2i_1i_2+k(k+1) \leq -1.$   

There are only $O(n^{d - b})$ subtori in $\torus_b$ so $\prob_p(\binspn_b) = \bigo{n^{d-b}n^{b-d-1}} = o(1).$  
\end{proof}
%
%

\begin{corollary}\label{lemma.noinspnnoexist}

Fix $d$ and $j \leq J_d$, and let $p = an^{-d/(j+1) -j}.$  Then 
$$\prob_p( \subtorusexists_{2j}\backslash \inspn_{2j} ) \to 0.$$ 
\end{corollary}

\begin{proof}[Proof of Lemma \ref{lemma.noinspnnoexist}]

If $\subtorusexists_{2j}$ occurs, then by Lemma \ref{lemma.maxinspn} there must be some $s$-dimensional subtorus $V$ such that $\inspn_{V}$ occurs and $s\geq 2j$.  Let $b$ be the minimal such $s$ and suppose $b > 2j.$  

 Therefore the probability there exists an internally spanned subtorus of dimension greater than $2j$ tends to zero if no subtorus of dimension $2j$ is also internally spanned.
\end{proof}

Now we can conclude that $p_c(2j,d)$ is also bounded below $f(n)n^{-d/(j+1) -j}$ for any $f(n)\to 0$.

\section{Poisson Approximation}

We use the Chen-Stein method for approximation by a Poisson distribution for postively related random variables. 
\begin{theorem}[Ross \cite{ross.steinmethod}, 4.14]

Let $X_1, \dots, X_m$ be indicator variables with $\prob( X_i = 1) = p_i,$ $Y=\sum_{i=1}^m X_i$, and $\lambda = \expect[Y] = \sum_i^mp_i.$  For each $i\in [m],$ let $\left(X_j^{(i)}\right)$ have the distribution of $(X_j)_{j\neq i }$ conditional on $X_i = 1$ and let $I$ be a random variable independent of all else, such that $\prob( I = i) = p_i/\lambda$ so that $Y^s = \sum_{j\neq I} X_j^{(I)} + 1$ has the size-bias distribution of $Y$.  If $X^{i}_j\geq X_j$ for all $i\neq j$ and $Z \sim \text{Po}(\lambda),$ then  


\beqlbl \label{eq.steinchen}
d_{TV}( Y,Z ) \leq \left ( \var(W) - \lambda + 2\sum_{i=1}^m p_i^2 \right ).
\eeqlbl
\end{theorem}

\begin{proof}[Proof of Theorem \ref{theorem3}]

Let $X_V$ denote the indicator random variable for the event $\inspn_V$.  Furthermore, for $W\in \torus_{2j}$ let $X_V^W$ denote the indicator function for the event $\inspn_V$ conditioned on $X_W = 1$.  If $V \cap W =\emptyset $ then $X_V^W = X_V.$  Otherwise $X_V^W \geq X_V.$    

For all $V,W\in\torus_{2j}$, 

\beq p_V = p_W  = M_{2j} = (2j)!2^{-j-1}a^{j+1}n^{2j-d}(1+ o(1))
\eeq and 
\beq p_{VW} = \expect[X_V X_W] = \prob_p( \inspn_V \cap \inspn_W ).
\eeq  Let $Y = \sum_{\torus_{2j}} X_V$.  Then 

\beqlbl
\lambda = \expect[Y] = (1+ o(1))\sum_{\torus_{2j}} (2j)!2^{-j-1}a^{j+1}n^{2j-d} = (1+o(1)){d \choose 2j}(2j)!2^{-j-1}a^{j+1}. \label{eq.lambda}
\eeqlbl  
If $V\cap W = \emptyset$ then $p_{VW} = p_Vp_W$ so will contribute nothing $\var(Y)$.  Let $\Gamma_V$ denote subset of $\torus_{2j} \backslash V$ such that $W\notin \Gamma_V$ implies $W\cap V = \emptyset$ or $W = V.$  Then 
\beq 
\var(Y) \leq\sum_{V\in \torus_{2j}}\left(  p_V +  \sum_{W \in \Gamma_V} p_{VW}\right).
\eeq
Finally we let $Z\sim \text{Po}(\lambda),$ a Poisson random variable with parameter $\lambda.$\\

Using Inequality \ref{eq.steinchen} we get

\beqlbl \label{eq.stch}
d_{TV}( Y, Z ) \leq \min\{1,\lambda^{-1}\}\sum_{V\in\torus_{2j}}\left ( p_V + \sum_{W\in \Gamma_V} p_Vp_W \right ) -\lambda + 2 \sum_{V \in \torus_{2j}} p_V^2.
\eeqlbl  Immediately we see that $\sum_{\torus_{2j}} p_V = \lambda$ so we can simplify Inequality \ref{eq.stch} to 

 \beqlbl \label{simplestein}
  d_{TV}( Y, Z ) \leq \sum_{V\in\torus_{2j}}\sum_{W\in \Gamma_V} p_{VW} + 2\sum_{V\in \torus_{2j}} p_V^2.
 \eeqlbl
 
The second part of the right hand side of \ref{simplestein} is easiest to deal with.  The size of $\torus_{2j}$ is $O(n^{d-2j})$, while $p_V = O(n^{2j-d}).$  Hence $2|\torus_{2j}|M_{2j}^2 = O(n^{d-2j} )O(n^{2j-d})^2 = O(n^{2j-d}) \to 0.$

For the first part of the right hand side of \ref{simplestein} will require a little more care.  For $0\leq r \leq 2j-1$ let $\Gamma^r_V$ denote the subset of $\Gamma_V$ such that for $W \in \Gamma^r_V$, $\dim(V \cap W) = r.$  

For a fixed $V \in \torus_{2j}$ let $\torus^V$ denote the set of subtori contained in $V$.  Let $\torus^V_r = \torus_{r} \cap \torus^V.$  For $U \in \torus^V_r$, let $\inspn_{U\to V}$ denote the even that $V$ is internally spanned conditioned on $U$ being completely open.  We state two lemmas whose proofs are rather technical and delayed until the appendix.

\begin{lemma} \label{condprob}

Fix $d>2$, and let $j \leq J_d$ and for some $\epsilon>0$, let $p \leq n^{-2j-\epsilon}.$  For $r < t\leq 2j$, let $x = \ceil{(t-r)/2}$.  Define $$f(t,r,x) = (x-1)(x+r) +t.$$  For $U \in \torus_{r}^V,$ 

\beqlbl
\prob_p(\inspn_{U\to V}) = \bigo{n^{ f(t,r,x)}p^{x}}.
\eeqlbl


\end{lemma}

\begin{lemma} \label{intersection}

Fix $d>2$, and let $j\leq J_d$ and $p \leq n^{-2j-\epsilon}$.  Fix $r< s\leq t\leq 2j$ and suppose $V\in \torus_t, W\in \torus_s, $ and $ V\cap W\in \torus_r.$  Let $i = \ceil{t/2},$ $k = \ceil{s/2}$, $l= \floor{r/2}$, $x = \ceil{(t-r)/2},$ and $y = \ceil{(s-r)/2}.$  Let $f$ be defined as in Lemma \ref{condprob}.  Then

\beqlbl \label{eq.int}
p_{VW} = \bigo{n^{l^2 + l+r}n^{f(t,2l,i-l)}n^{f(s,2l,k-l)}p^{i+k-l+1}}. 
\eeqlbl

\end{lemma}

Assuming the lemmas are true we can finish the proof of Theorem \ref{theorem3} rather easily.  Let $t = s= 2j$, $r < 2j.$  For $V\in \torus_{2j}$ the size of $\Gamma_V^r$ is $\bigo{n^{t-r}}.$  For $p=an^{-d/(j+1) - j}$, if $\epsilon < \frac{ 1}{j+1}$ then for large $n$, $p < n^{-2j - \epsilon}.$  Then
 
\begin{align*}
|\Gamma_V^r|p_{VW} &= \bigo{n^{2j-r + l^2+l+r}n^{2f(2j,2l,j-l)}p^{2j-l+1}}\\
&= \bigo{n^{2j-d-\epsilon}}.
\end{align*}



Therefore $$
\sum_{V\in\torus_{2j}} \sum_{r=0}^{2j} \sum_{W\in \Gamma^r_V} p_{VW}= \bigo{n^{d-2j}}\bigo{n^{2j-d-\epsilon}} = o(1).$$
\end{proof}

\section{Proofs of Theorems}

Theorem \ref{theorem1} can viewed as an immediate corollary of Theorem \ref{theorem3} and Lemma \ref{lemma.noinspnnoexist}.   These combine to show $\prob_p(\sublat_{2j} \backslash \inspn_{2j} ) \to 0.$

\begin{proof}[Proof of Theorem \ref{theorem4}]

We will use "sprinkling" as in \cite{bbmhighdim} to show that if $\inspn_{2J_d}$ occurs $\inspn_t$ occurs for $t\geq {2J_d+2}.$  If $d<(J_d+1)(J_d+2)$ then for some $\epsilon >0,$ $d/(J_d+1) + J_d < 2J_d +2 - \epsilon.$  For $\delta > 0 $ let $p = (a+\delta)n^{-d/(J_d+1) -J_d}$, $p_1 = an^{-d/(J_d+1)-J_d}$, and $p_2= n^{-2J_d-2 + \epsilon}.$  For large enough $n$, $p_1+p_2 < p$.

Consider two random initial configurations $\omega_0^1$ and $\omega_0^2$ where each node in $[n]^d$ is open with probability $p_1$ and $p_2$ respectively.  For large enough $n$ the union of these configurations, $\omega_0^1\cup \omega_0^2,$ is stochastically dominated by the random configuration, $\omega_0,$ where each node is open with probability $p$.  

For each $V \in \torus_{2J_d}$ let $V\subset V_{2}\subset \cdots \subset V_{d-2J_d}$ be a seqeunce of subtori such that $V_k \in \torus_{k-2J_d}$ for $2\leq k \leq d-2J_d$.  Furthermore, let $N_{k}(V)$ denote the set of nodes in $V_k$ that are exactly distance $2k$ away from $V$.  The size of $N_k(V)$ is at least $cn^{2J_d+k}$ for each $k$.  Let $\nbr_V^k$ denote the even that for each $2\leq i\leq k$, $N_i(V)$ contains at least 1 open node.  If $V$ is internally spanned and $\nbr_V^k$ occurs, then $V_k$ is internally spanned.  

There are at least $\frac12n^{2J_d+2}$ nodes in each $N_{i}(V),$ so 
\beqlbl \label{sprink}
\prob_{p_2} (\nbr^k_V) = \prod_{i=2}^k\left(1-( 1 - p_2)^{ |N_i(V)|}\right) = 1-o(1).
\eeqlbl

Fix an ordering of $\torus_{2J_d}$ and let $\inspn'_V$ denote the even that $V$ is the first subtorus in the ordering such that $V$ is internally spanned.  The event $\inspn_{2J_d}$ is the disjoint union $\cup_{V} \inspn'_V.$  For $2\leq k \leq d-2J_d$,
\begin{align*}
\prob_p(\inspn_{2J_d} \cap \inspn_{2J_d+k} ) &\geq \sum_{V \in \torus_{i}} \prob_{p}\left(\inspn'_V\cap \nbr^k_V\right)\\
& \geq \sum_{V} \prob_{p_1,p_2}( \inspn'_V(\omega_0^1)\cap\nbr^k_V(\omega_0^2))\\
& \geq \sum_{V} \prob_{p_1}(\inspn'_V)\prob_{p_2}(\nbr^k_V) \\
& = \sum_V \prob_{p_1}(\inspn'_V)(1-o(1))\\
&= \prob_{p_1}(\inspn_{2J_d})(1-o(1)).
\end{align*}
For any $\delta>0,$
\begin{align*}
 \lim \sup \prob_p(\inspn_{2J_d}\backslash \inspn_{2J_d+k}) &\leq  \lim \sup ( \prob_p(\inspn_{2J_d}) - \prob_{p_1}(\inspn_{2J_d}) )\\
& = e^{-\lambda(J_d,d,a)}- e^{-\lambda({J_d},d,a+\delta)}.
\end{align*}
This last expression tends to 0 with $\delta$, concluding the proof.  
\end{proof}


\begin{proof}[Proof of Theorem \ref{theorem5}]

If $(J_d+1)(J_d+2)=d\geq6$ then $$d/(J_d+1) + J_d = d/(J_d+2) + (J_d+1) = 2J_d+2.$$ Unlike in Theorem \ref{theorem4} we do not necessarily have unstoppable growth once we have at least one internally spanned subtorus of dimension $2J_d$.  Equations \eqref{thm51} and \eqref{thm53} state that with positive probability there is unstoppable growth while Equation \eqref{thm52} says the internally spanning a $2J_d$-dimensional subtorus does not guarantee the spanning of a $2J_d+2$ dimensional subtorus.

To prove \eqref{thm51} and \eqref{thm52} we will modify the sprinkling arguments from the proof of Theorem \ref{theorem4}.  Let $p = an^{-2J_d-2}$, $p_1 = p_2 = \frac a2n^{-2J_d-2}.$  

The difference with previous arguments is that for Equation \eqref{sprink}, we have instead for some $c>0,$
$$\prob_{p_2}( \nbr^2_V ) \geq\left ( 1-\left (1-\frac a2n^{-2J_d-2}\right)^{cn^{2J_d+2}} \right)(1-o(1)) = (1- e^{-ac/2})(1-o(1)).$$
Repeating previous arguments
\begin{align*}
\prob_p(\inspn_{2J_d} \cap \inspn_{2J_d+2} ) &\geq  \sum_{V} \prob_{p_1}(\inspn'_V)\prob_{p_2}(\nbr^k_V) \\
& = \sum_V \prob_{p_1}(\inspn'_V)\left(1-(1-p_2)^{cn^{2J_d+2}}\right)\\
&= \prob_{p_1}(\inspn_{2J_d})(1-e^{-ac/2})(1-o(1)).
\end{align*}
Therefore $$\prob_p(\inspn_{2J_d+2}) \geq (1-e^{\lambda(J_d,d,a/2)})(1-e^{-ac/2})(1-o(1)) > 0,$$ proving \eqref{thm51}.

For \eqref{thm52}, we again let $p = an^{-2J_d-2},$ and $Y_{2J_d}$ denote the number of subtori of dimension $2J_d$ that are internally spanned.

We may view the even $\inspn_{2J_d}$ as the disjoint union $\cup_{k=1}^{\infty}\{Y_{2J_d} = k\}$.  By Theorem \ref{theorem3} we know that $$\prob_p(Y_{2J_d} = 1) = e^{-\lambda(J_d,d,a)}\lambda(J_d,d,a)(1+o(1)) >0.$$  Let $\inspn^*_V$ denote the event that $V$ is internally spanned and no other $V' \in \torus_{2J_d}$ is internally spanned.  Then $$\prob_p(Y_{2J_d} = 1) = \prob_p\left(\bigcup_{V\in\torus_{2J_d}} \inspn^*_V\right).$$

For $V\in \torus_{2J_d}$ let $Q(V)$ denote the event that every node exactly distance 1 or 2 away from $V$ is not open.  For some $C>0$, there are at most $Cn^{2J_d+2}$ such nodes.  All are not open with probability at least $(1-p)^{Cn^{2J_d+2}} = e^{-aC}(1-o(1)).$  Moreover $Q(V)$ and $\inspn^*_V$ are positively related, so 
$$\prob_p(\inspn^*_V \cap Q(V) ) \geq \prob_p(\inspn^*_V)\prob_p(Q(V)).$$

For $W\in \torus_{2J_d+2}$, recall $\binspn_{W}$ denotes the event that $W$ is internally spanned but no $W'\subset W$ with $W'\in \torus_{2J_d}$ is internally spanned.  By Lemma \ref{nob} $\prob_p(\binspn_{2J_d+2}) =o(1).$ 

\begin{align*}
\prob_p\left( \inspn_{2J_d} \cap \left(\inspn_{2J_d+2}\right)^c \right) &\geq \prob_p\left( \{Y_{2J_d} =1\} \cap \left(\inspn_{2J_d+2}\right)^c \right)\\
&\geq \prob_p\left( \bigcup_{V\in \torus_{2J_d}} \inspn^*_V \cap Q(V) \right ) - \prob_p( \binspn_{2J_d+2} )\\
&\geq\left( \sum_{V\in \torus_{2J_d}} \prob_p(\inspn_V^*)\prob_p(Q(V) )\right)- o(1)\\
&\geq  \left(\sum_V \prob_p(\inspn^*_V)e^{-ac}(1-o(1))\right) - o(1)\\
&\geq e^{-ac}\prob_p(\{Y_{2j} = 1\})(1-o(1)) - o(1)\\
&\geq e^{-ac}\lambda(J_d,d,a) e^{-\lambda(J_d,d,a)}(1-o(1)).
\end{align*}
The last line is positive for large enough $n$.

For each $V\in \torus_{2J_d+2}$, and $0<k\leq d-2J_d-2$, $\prob_p(\nbr_V^k) \geq (1-e^{-n/2})^k.$  There are at most $n^d$ subtori in $\torus_{2J_d+2}$.  Let $R = \bigcap_{V \in \torus_{2J_d+2}} \nbr_V^k.$  The event $\{\inspn_{2J_d+2}\cap R \}$ is a subset of $\{\inspn_{2J_d+2}\cap \inspn_{2J_d+2+k}\} $.  By a very crude union bound $\prob_p(R^c) \leq kn^de^{-n/2}.$  Then 

\begin{align*}
\prob_p( \inspn_{2J_d+2} ) &= \prob_p( \inspn_{2J_d+2} \cap R ) + \prob_p( \inspn_{2J_d+2} \cap R^c )\\
& \leq \prob_p( \inspn_{2J_d+2} \cap \inspn_{2J_d+2+k}) + kn^de^{-n/2}\\
&\leq \prob_p( \inspn_{2J_d+2} ) + kn^{d}e^{-n/2}
\end{align*}

For large $n$ we see that $\prob_p( \inspn_{2J_d+2+k} )\to \prob_p( \inspn_{2J_d+2}).$  Letting $k = d-2J_d-2$ proves \eqref{thm53}.
\end{proof}

\begin{proof}[Proof of Theorem \ref{theorem6}]

The first part of the theorem will follow from arguments similar to Lemma \ref{nob}.  We will show for $d=(J_d+1)(J_d=2)$ and $J_d>1$, if $p=an^{-2J_d-2}$ $b>2J_d+2$, then

$$\prob_p( \binspn^{2J_d+2}_b ) \to 0.$$

As in Lemma \ref{nob}, if $\binspn_{b}^{2J_d+2}$ occurs then for $2J_d+2<b'\leq b$, there exists $W \in \torus_{b'}$ and $W_1,W_2\subset W$ such that $\dim(W_1) \leq \dim( W_2) < 2J_d+2$, $\spn{W_1,W_2} = W$ and $\inspn_{W_1} \circ \inspn_{W_2}$ occurs.  

Let $T$ denote all pairs of $(x_1,x_2)$ such that $x_1\leq x_2 < b'$ and $x_1+x_2\geq b'-2.$  Let $T_0$ denote the subset of pairs $(t_1,t_2)\in T$ such that $t_1 + t_2 = b'-2.$  The computations where $t_1,t_2$ or $b'$ are odd follow similar arguments as those that follow.

For simplicity let us assume that $\dim(W_1) = t_1 =  2i_1$, $\dim(W_2) = t_2 =  2i_2$, $b' = 2J_d+2 + 2k$ and $i_1 + i_2 = J_d+k.$  There are $\bigo{n^{2b'-t_1-t_2}}$ choices of $W_1\in\torus_{t_1}$ and $W_2\in \torus_{t_2}.$  \begin{align*}
\prob_p( \binspn^{2J_d+2}_{W} ) &= \bigo{ \sum_{(x_1,x_2)\in T} n^{2b'-x_1-x_2}M_{x_1}M_{x_1} }\\
&=\bigo{\sum_{(t_1,t_2)\in T_0} (4J_d+4)n^{2b'-t_1-t_2}M_{t_1}M_{t_2}}\\
&= \bigo{ \sum_{(t_1,t_2)\in T_0} n^{4J_d+4k+4 - 2i_1-2i_2 +  i_1^2+i_2^2 + 3i_2+3i_2}p^{i_1+i_2+2}}\\
&= \bigo{\sum_{(t_1,t_2)\in T_0}n^{2J_d+2k+2 -d+k^2+k - 2i_1i_2}}\\
&= \bigo{\sum_{(t_1,t_2) \in T_0} n^{b'-d + k^2+k-2i_1i_2}}.
\end{align*}
Here we diverge slightly with the proof of Lemma \ref{nob}.  When $d=6$, if $i_1 = i_2 = k = J_d=1,$ then $k^2 + k - 2i_1i_2 =0$ which will cause issues.  However, for $d>6$ it must be that $i_2\geq 2.$  In this case we have $k^2+k \leq 2i_1i_2 -1$ so the above bounds give

$$\prob_p(\binspn^{2J_d+2}_{b'} )= \bigo{\sum_{W \in \torus_{b'}} \prob_p( \binspn^{2J_d+2}_{W})} = \bigo{n^{d-b'}n^{b'-d-1}}= o(1).$$

In particular, this says $$\prob_p(\inspn_d\backslash\inspn_{2J_d+2}) \leq \sum_{2J_d+2<b\leq d}\prob_p(\binspn^{2J_d+2}_b)  = o(1),$$ proving \eqref{thm62} Theorem \ref{theorem6}.  

Lastly we will prove \eqref{thm63} of Theorem \ref{theorem6}, there exists $c>0$ such that for large enough $n$, $$\prob_p(\inspn_6\backslash \inspn_4)> c.$$  

Let $$\vleft = \{( *, *, a_3, a_4,a_5,a_6) \ | \ 1\leq a_3,a_4,a_5,a_6\leq n/2 \}$$ and $$\vright = \{ ( b_1,b_2,b_3,b_4,*,*) \ | \ n/2 < b_1,b_2,b_3,b_4 < n \}.$$

For a pair $(V,W)$ such that $V\in \vleft$ and $W\in \vright$, if $\inspn_V\cap \inspn_W$ occurs then $\inspn_d$ also occurs.

For each $V\in \torus_2$ and some constants $0<c_1<c_2$, such that $c_1n^{-4} \leq  \prob_p( \inspn_V ) \leq c_2n^{-4}.$  Again let $Q(V)$ denote the event that all nodes with distance exactly 1 or 2 from $V$ are note open.  There are at most ${6 \choose 2}n^4$ possible nodes, so for large $n$, $\prob_p(Q(V)) \geq (1- an^{-4})^{15n^4} \geq e^{-30a}.$ 

For a pair $(V,W)$ such that $V \in \vleft$ and $W\in \vright$.  Let $E(V,W)$ denote the event that all subtori in $\torus_2$ except for possibly $V$ and $W$ are not internally spanned.  There are at most $15n^4-2$ such subtori each with probability at least $1-c_2n^{-4}$ of not spanning.  Then $$\prob_p(E(V,W)) \geq (1-c_2n^{-4})^{15n^4} \geq e^{-15c_2} >0.$$
The event $E(V,W)$ is positively related to both events $Q(V)$ and $Q(W)$, so $$\prob_p(Q(W)\cap Q(V) \cap E(V,W)) \geq \prob_p(Q(V))\prob_p(Q(2))\prob_p(E(V,W).$$  Furthermore, the events $\inspn_V, \inspn_W, Q(V),$ $Q(W)$ are all pairwise independent.  Therefore

\begin{align*}
\prob_p(\inspn_6\backslash \inspn_4) &\geq \prob_p\left(  \bigcup_{V\in \vleft, W\in \vright} \inspn_V \cap \inspn_W \cap Q(V) \cap Q(W) \cap E(V,W) \right ) \\
&\geq\sum_{V\in \vleft, W\in \vright} \prob_p\left( \inspn_V \cap \inspn_W \cap Q(V) \cap Q(W) \cap E(V,W) \right )\\
&\geq\sum_{V\in \vleft, W\in \vright}  \prob_p(\inspn_V) \prob_p(\inspn_W)\prob_p(Q(V))\prob_p(Q(W))\prob_p(E(V,W))\\
&\geq \sum_{V\in \vleft, W\in \vright} (c_1n^{-4})^2e^{-60a}e^{-15c_2}\\
&\geq \left( \lfloor n/2 \rfloor \right)^{4}\left( \lfloor n/2 \rfloor \right)^{4}n^{-8}c_1^2e^{-60a-15c_2}\\
& \geq {c_1^2}e^{-60a-15c_2}/256\\
&>0.
\end{align*}
\end{proof}

\section{Acknowledgements}

The author would like to thank Christopher Hoffman and Janko Gravner for invaluable mentorship throughout this project.  The author would also like to thank the referees for very helpful critiques and suggestions for improvement.

\section{Appendix}

In this appendix we provide proofs for Lemmas \ref{condprob} and \ref{intersection}

\begin{proof}[Proof of Lemma \ref{condprob} (by induction on $t$)]

In any inductive proof one must establish a base case.  Recall that $\inspn_{U\to V}$ denotes the event that $V$ is internally spanned conditioned on the event that $U$ is completely open.  Also recall $\dim(V) = t$ and $\dim(U) = r$ with $U\subset V$.  If $t-r=\leq 2$ then $\inspn_{U\to V}$ occurs if any generic point in $V$ is open so $\prob_p(\inspn_{U\to V}) = n^{t}p$ in this case.  

In order to understand how $\inspn_{U\to V}$ occurs we recall Lemma \ref{lemma.inspnbreakdown}.  If $V$ is internally spanned then there are two subtori $V_1$ and $V_2$ that are disjointly internally spanned and $\spn{V_1\cup V_2 } = V$ and both $\dim(V_1)$ and $\dim(V_2)$ are less than $t$.  Through the same arguments of Lemma \ref{lemma.inspnbreakdown} if $\inspn_{U\to V}$ occurs then there are two subtori of dimension less than $t$, $V_1$ and $V_2$ such that $V=\spn{V_1\cup V_2}$ and $\inspn_{U\to V_1}$ and $\inspn_{U\to V_2}$ occur disjointly.

If $V_1\cap U =\emptyset$ then $\inspn_{U\to V_1}$ occurs if and only if $\inspn_{V_1}$ occurs.  Otherwise if $V_1\cap U \neq \emptyset$ let $W = \spn{V_1\cup U}.$  If $\inspn_{U\to V_1}$ occurs, then $\inspn_{U\to W}$ also occurs.  Therefore we may assume that $V_1\cap U =\emptyset $ or $U$.  We may also assume that $V_2$ does not intersect $U$.  

Let $D^0_V$ denote the subset of $D_V$ such that $V_1$ does not intersect $U$, and let $D^1_V$ the subset of $D_V$ where $V_1$ contains $U.$  

Combining the two cases gives the following upper bound 

\beqlbl 
\prob_p(\inspn_{U\to V}) \leq \sum_{k=0}^1\sum_{D^k_V}\prob_p(\inspn_{U\to V_1}) \prob_p(\inspn_{U\to V_2}).
\eeqlbl

Let $Q=Q(t,r,x) = n^{f(t,r,x)}p^x$.  For each $k=0,1$ we will show $$\frac{1}{Q}\sum_{D^k_V}\prob_p(\inspn_{U\to V_1}) \prob_p(\inspn_{U\to V_2}) = \bigo{1}.$$  

Before proceeding with the proof we provide a list of definitions of variables that we will use.

\begin{itemize}
\item $t = \dim(V)$, $r= \dim(U)$, $i = \ceil{t/2}$, $x= \ceil{(t-r)/2}$, $\alpha = 2i-t$, $\beta=2x-t+r$.   
\item $t_1 = \dim(V_1)$,  $i_1 = \ceil{t_1/2}$, $x_1= \ceil{(t_1-r)/2}$, $\alpha_1 = 2i_1-t_1$, $\beta_1=2x_1-t_1+r$,
\item $t_2 = \dim(V_2)$,  $i_2 = \ceil{t_2/2}$, $x_2= \ceil{(t_2-r)/2}$, $\alpha_2 = 2i_2-t_2$, $\beta_2=2x_2-t_2+r$.   
\end{itemize}

The variables $\alpha, \alpha_1, \alpha_2$ and $\beta, \beta_1, \beta_2$ all are in $\{0,1\}$ depending on the parity of $t,t_1, t_2$ and $t-r,t_1-r,t_2-r.$

For $(V_1,V_2) \in D^0_V$

$$\prob_p(\inspn_{U \to V_1})\prob_p(\inspn_{U\to V_2}) = \prob_p(\inspn_{V_1}) \prob_p(\inspn_{V_2})$$
so 
$$\sum_{D^0_V} \prob_p(\inspn_{U\to V_1}\circ\inspn_{U\to V_2}) \leq \sum_{D_V} \prob_p(\inspn_{V_1}) \prob_p(\inspn_{V_2}) =\bigo{\prob_p(\inspn_V)} = \bigo{n^{i^2+2t-i}p^{i+1}}.$$

The exponent of $n^{i^2+2t-i-f(t,r,x)}p^{i+1-x}$ is at most

$$i^2+2t-i-(x-1)(x+r) - t - 2j(i+1-x) -\epsilon(i+1-x).$$

Rearranging the terms and noting that $1\leq x \leq i \leq j$ and $t\leq 2j$ is apparent that 

$$(i+x- 2j)(i-x)+ (t-2j) + r(1-x) + (x-i)-\epsilon(i-x+1)$$

is at most $-\epsilon$ and therefore 

\beqlbl \label{d0vbound}
\sum_{D^0_V} \prob_p(\inspn_{U\to V_1}\circ\inspn_{U\to V_2})=\bigo{Q(t,r,x)}.
\eeqlbl

Next we consider the contribution from $D^1_V$.  Let $D^1(t_1,t_2)$ denote the subset of $D^1_V$ such that $\dim(V_1) = t_1$ and $\dim(V_2) = t_2.$  There are $\bigo{1}$ possibilities for $V_1$ and $\bigo{n^{t-t_2}}$ possibilities for $V_2$.  Therefore for each $(t_1,t_2)$ that satisfies $t_1<t$, $t_2<t$ and $t_1+t_2 +2\geq t$ we have

$$\sum_{D^1_V(t_1,t_2)} \prob_p(\inspn_{U\to V_1}\circ \inspn_{U\to V_2}) = \bigo{n^{t-t_2}n^{i_2^2+2t_2-i_2}n^{f(t_1,r_1,x_1)}p^{i_2+1+x_1}}.$$

The exponent of $n^{t-t_2+i_2^2+2t_2-i_2+f(t_1,r,x_1)-f(t,r,x)}p^{i_2+x_1+1-x}$ in terms of $i_2$, $x_1$, $x$ and $r$ is bounded above by

\beqlbl \label{exub2}
i_2^2 + i_2 + \alpha_2 + (x_1-1)(x_1+r) + r+ 2x_1 -\beta_1 - (x-1)(x+r) -2j(i_2+x_1-x+1).
\eeqlbl

An increase in $i_1$ or $x_1$ will cause a decrease in this upper bound.  Therefore, we only need to show the above upper bound on the exponent is nonpositive for the smallest choices of $i_2$ and $x_1$.  This occurs when $i_2 + x_1+1 = x$ or $i_2+x_1=x$ depending on the parity of $t_2, t_1-r$ and $t-r$.  There are eight possible choices for the parity and in each case \eqref{exub2} is nonpositive.  We check the simplest case when all are even and $i_2 + x_1 +1= x.$  The \eqref{exub2} simplifies to 

$$i_2(i_2+1) + x_1(x_1+1) - (i_2+x_1)(i_2+x_1+1+r) \leq 0.$$

For the finite number of choices of $t_1$ and $t_2$ 

$$\sum_{D^1_V(t_1,t_2)} \prob_p(\inspn_{U\to V_1}\circ \inspn_{U\to V_2}) = \bigo{Q(t,r)}$$ so 
\beqlbl \label{d1vbound}
\sum_{D^1_V}\prob_p(\inspn_{U\to V_1}\circ \inspn_{U\to V_2})  = \bigo{Q(t,r)}.
\eeqlbl

Combining \eqref{d1vbound} and \eqref{d0vbound} finishes the proof.
\end{proof}  

\begin{proof}[Proof of Lemma \ref{intersection} (by induction on $t$ and $s$)]

Before we begin the proof we note that if $r = s = t$ then 
\beqlbl
p_{VW} = p_V = \bigo{n^{i^2+ 2t-i}p^{i+1}}
\eeqlbl
and if $r = s < t$ then
\beqlbl \label{contained}
p_{VW} \leq p_W \prob_p(\inspn_{W\to V}) = \bigo{ n^{f(t,r,x) + k^2+2s-k}p^{x+k+ 1}}
\eeqlbl
by applying Lemma \ref{condprob}.  

Therefore we only need to consider the case when $r< s\leq t$.  The symmetry of $V$ and $W$ will account for when $t< s$.  

Let $R = R(t,s,i,k,r,l) = n^{l^2 + l+r}n^{f(t,2l,i-l)}n^{f(s,2l,k-l)}p^{i+k-l+1}$ as in the statement of the lemma.  If $\inspn_{V} \cap \inspn_W$ occurs then for some pair $(V_1,V_2)\in D_V,$ $\left(\inspn_{V_1}\circ\inspn_{V_2}\right )\cap \inspn_W$ must occur.  We prove the lemma by showing

$$\frac{1}{R}\sum_{D_V} \prob_p\left ( (\inspn_{V_1}\circ \inspn_{V_2}) \cap \inspn_W \right) = \bigo{1}.$$

We use the same definitions of $t_1,t_2,i_1,i_2,$ etc. as in Lemma \ref{condprob} and also define:
\begin{itemize}
\item  $r_1 = \dim(V_1\cap W)$, $l_1 = \floor{r_1/2}$,
\item $r_2  = \dim(V_2\cap W)$, $l_2 = \floor{r_2/2}.$
\end{itemize}

If both $V_1$ and $V_2$ do not interect $W$ we has $\inspn_{V_1}\circ \inspn_{V_2}$ is independent of $\inspn_W$.  Let $D_{VW}^0$ denote sucha subset of $D_V$.  Then 

\beqlbl \label{sum0}
\sum_{D_{VW}^0}\prob_p\left( ( \inspn_{V_1}\circ\inspn_{V_2}) \cap \inspn_W\right) \leq \bigo{p_Vp_W}.
\eeqlbl

Similar to Lemma \ref{condprob} one can see that $p_Vp_W = \bigo{R(t,s,i,l,r,l)}.$  We are left with the two cases:  when only $V_1$ (w.l.o.g.) intersects $W$, and when both $V_1$ and $V_2$ intersect $W$.  

Recall $D_V(t_1,t_2)$ is the subset of $D_V$ such that $\dim(V_1) = t_1$ and $\dim(V_2) = t_2$.  Let $D_{VW}^2(t_1,t_2,r_1,r_2)$ denote the subset of $D_{V}(t_1,t_2)$ such that both $V_1$ and $V_2$ intersect $W$ and the dimension of the intersection is $r_1$ and $r_2$ respectively and $D_{VW}^1(t_1,t_2,r_1)$ denote the subset of $D_V(t_1,t_2)$ where the intersection of $V_1$ and $W$ has dimension $r_1$ and where $V_2$ does not intersect $W$.  

Let us first consider the sum 

\beqlbl\label{sum2}
\frac{1}{R}\sum_{D_{VW}^2(t_1,t_2,r_1,r_2)} \prob_p((\inspn_{V_1} \circ \inspn_{V_2}) \cap \inspn_W).
\eeqlbl

The summand satisfies both of the following inequalities:

$$ \prob_p((\inspn_{V_1} \circ \inspn_{V_2} )\cap \inspn_W) \leq \prob_p(\inspn_{V_1}\cap \inspn_{W})\prob_p(\inspn_{W\to V_2})$$ and
$$ \prob_p((\inspn_{V_1} \circ \inspn_{V_2} )\cap \inspn_W) \leq \prob_p(\inspn_{V_2}\cap \inspn_{W})\prob_p(\inspn_{W\to V_1}).$$

Let us assume (w.l.o.g.) that $V_2$ is not contained in $W$ and therefore $l_2< i_2$.  We may use the induction hypothesis and Lemma \ref{condprob} to show that for $(V_1,V_2) \in D^1_{VW}(t_1,t_2,r_1,r_2)$ 

$$\prob_p((\inspn_{V_1}\circ\inspn_{V_2})\cap \inspn_W) = \bigo{n^{l_1^2+ l_1 + r_1+ f(t_1,2l_1,i_1-l_1) +f(s,2l_1,k-l_1) + f(t_2,r_2,x_2)}p^{i_1+k-l_1+1 + x_2}}.$$

For each choice of $t_1,t_2,r_1,r_2$, the size of $D_{VW}^2(t_1,t_2,r_1,r_2)$ is $\bigo{n^{2r-r_1-r_2}}.$  Therefore to show
\beqlbl
\frac{1}{R}\sum_{D_{VW}^2(t_1,t_2,r_1,r_2)} \prob_p((\inspn_{V_1}\circ\inspn_{V_2})\cap \inspn_W) =\bigo{1}
\eeqlbl
it is sufficient to show
\begin{multline}\label{dvw2exp}
2r-r_1-r_2+l_1^2+l_1+r_1+f(t_1,2l_1,i_1-l_1) + f(s,2l_1,k-l_1)+f(t_2,r_2,x_2)\\ -l^2-l-r- f(t,2l,i-l)-f(t,2l,k-l) \\- 2j(i_1+k+x_2+1-l_1-i-k+l-1)
\end{multline}
is nonpositive when $D_{VW}^2(t_1,t_2,r_1,r_2)$ is nonempty.  The expression in \ref{dvw2exp} decreases with an increase $i_1$ or $i_2$, and also decreases with a coupled increase in both $l_1$ and $i_1$ or $l_2$ and $i_2$.  It suffices to consider minimal cases when $r_1 + r_2 +2 = r$ and $t_1 + t_2 = t-2$.   If the parity of all the variables is even then $i_1 + i_2 = i-1$, $l_1 + l_2 = l-1$ and $x_2 = i_2-l_2$ and \ref{dvw2exp} simplifies from

\begin{multline*}
2r-r_1-r_2+l_1^2+l_1 +r_1 + i_1^2-l_1^2-i_1-l_1 + t_1 + k^2-l_1^2-k-l_1 + s + i_2^2-l_2^2 - i_2-l_2\\
- l^2-l-r-i^2+l^2+i+l -t -k^2+l^2+k+l - s
\end{multline*}
to 
$$
r_1 +2 -2i - 2i_1i_2   +2l+ 2l_1l_2 \leq (2l_1( l_2 + 1)  - 2i_1i_2) + (2- 2(i-l))\leq 0 
$$
since $l<i$, $l_2 < i_2$ and $l_1 \leq i_1$.  

A similar computation shows that \ref{dvw2exp} is nonpositive for the other $2^6-1$ parity combinations.

Lastly we assume only $V_1$ interects $W$ and $V_2$ does not.  For $t_1 + t_2 + 2\geq t$ we have $$|D_{VW}^1(t_1,t_2,r_1)| = \bigo{n^{r-r_1 + t- t_2}}.$$  For each $t_1,t_2,$ and$r_1$


\begin{multline}\label{lastcase}
\frac{1}{R}\sum_{D_{VW}(t_1,t_2,r_1)}\prob_p\left( ( \inspn_{V_1}\circ \inspn_{V_2}) \cap W \right) \leq \prob_p( \inspn_{V_2}) \prob_p( \inspn_{V_1}\cap \inspn_W) \\
= \bigo{\frac{1}{R}n^{t-t_2+r-r_1}n^{i_2^2 + 2t_2 - i_2}p^{i_2+1} n^{ l_1^2 + l_1 + r_1 + f( t_1,2l_1,i_1-l_1)+ f( s,2l_1,k-l_1)}p^{k+i_1-l_1+1} }.
\end{multline}

With some simplifications the exponent in \ref{lastcase} is at most

\begin{multline}\label{dvw1exp}
t-t_2+r-r_1+i_2+2t_2-i_2 + \\l_1^2+r_1+l_1+i_1^2-l_1^2-i_1-l_1+t_1+ k^2-l_1^2-k-l_1+s\\
-l^2-l-r-i^2+l^2+l+i-t-k^2+l^2+k+l-s \\-2j(i_2+i_1+k-l_1+2-i-k+l-1).
\end{multline}

This decreases with increases in either $i_1$ or $i_2$ and also decreases with a coupled increase in $i_1$ and $l_1$.  Again there are parity choices for $t$, $r$, $t_1$, $t_2$,  and $r_1.$  Assume that each of the variables are even and minimal ($t_1+t_2 +2=t$ and therefore $i_1+i_2=i-1$).  Then \eqref{dvw1exp} simplifies to

\beqlbl\label{dvw1exp2}
-2i_1i_2+(l-l_1)(l+l_1+1-2j)< 0
\eeqlbl

A similar computation shows that \ref{dvw1exp} is nonpositive for the other $2^5-1$ possible choices for the parity of each of the variables.  

Altogether the three sums combine to show

$$ p_{VW}\leq \sum_{D_V}\prob_p((\inspn_{V_1}\circ \inspn_{V_2}) \cap \inspn_W) = \bigo{R}.$$
\end{proof}

\bibliographystyle{siam}

\begin{thebibliography}{10}

\bibitem{brazil}
{\sc J.~Adler and U.~Lev}, {\em Bootstrap percolation: visualizations and
  applications.}, Brazilian Journal of Physics, 33 (2003).

\bibitem{aizenman}
{\sc M.~Aizenman and J.~L. Lebowitz}, {\em Metastability effects in bootstrap
  percolation}, J. Phys. A, 21 (1988), pp.~3801--3813.

\bibitem{BB:2006}
{\sc J.~Balogh and B.~Bollob\'as}, {\em Bootstrap percolation on the
  hypercube}, Probability Theory Related Fields, 134 (2006), pp.~624--648.

\bibitem{bbmhighdim}
{\sc J.~Balogh, B.~Bollob{\'a}s, and R.~Morris}, {\em Bootstrap percolation in
  high dimensions}, Journal of Combinatorics, Probability and Computing, 19
  (2010), pp.~643--692.

\bibitem{bolaghpittel}
{\sc J.~Balogh and B.~Pittel}, {\em Bootstrap percolation on the random regular
  graph}, Random Structures and Algorithms, 30 (2007), pp.~257--286.

\bibitem{bollobasriordan}
{\sc B.~Bollob\'as and O.~Riordan}, {\em The critical probability for random
  {V}oronoi percolation in the plane is 1/2}, Probability Theory Related
  Fields, 136 (2006), pp.~417--468.

\bibitem{bethe}
{\sc J.~Chalupa, P.~L. Leath, and G.~R. Reich}, {\em Bootstrap percolation on a
  {B}ethe lattice}, J. Phys. C, 12 (1979), pp.~L31--L35.

\bibitem{FriKal}
{\sc G.~K. Ehud~Friedgut}, {\em Every monotone graph property has a sharp
  threshold}, Proc. Amer. Math. Soc., 124 (1996), pp.~2993--3002.

\bibitem{ghps}
{\sc J.~Gravner, C.~Hoffman, J.~Pfieffer, and D.~Sivakoff}, {\em Bootstrap
  percolation on the hamming torus}.
\newblock preprint, 2012.

\bibitem{grimmettpercolation}
{\sc G.~Grimmett}, {\em Percolation}, Springer, 1999.

\bibitem{holroyd}
{\sc A.~E. Holroyd}, {\em Sharp metastability threshold for two-dimensional
  bootstrap percolation}, Probability Theory Related Fields, 125 (2003),
  pp.~195--224.

\bibitem{hlr}
{\sc A.~E. Holroyd, T.~M. Liggett, and D.~Romik}, {\em Integrals, partitions,
  and cellular automata}, Trans. Amer. Math. Soc., 356 (2004), pp.~3349--3368.

\bibitem{ross.steinmethod}
{\sc N.~Ross}, {\em Fundamentals of {S}tein's method}, Probability Surveys, 8
  (2001), pp.~210--293.

\bibitem{schonmann}
{\sc R.~H. Schonmann}, {\em On the behavior of some cellular automata related
  to bootstrap percolation}, Annals of Probability, 20 (1992), pp.~174--193.

\bibitem{vanenter}
{\sc C.~D. van Enter}, {\em Proof of {S}traley's argument for bootstrap
  percolation}, J. Statist. Phys., 48 (1987), pp.~943--945.

\end{thebibliography}

\end{document}